\renewcommand{\algorithmicrequire}{\textbf{Input:}}
\renewcommand{\algorithmicensure}{\textbf{Output:}}
\newtheorem{remark}{Remark}
\newtheorem{definition}{Definition}
\newtheorem{lemma}{Lemma}
\newtheorem{theorem}{Theorem}
\newtheorem{example}{Example}
\newtheorem{proposition}{Proposition}
\journal{XXX}
\begin{document}

\hypersetup{hidelinks}

\begin{frontmatter}

\title{Set stabilization of Boolean control networks based on bisimulations: A dimensionality reduction approach\tnoteref{titlenote}}

\tnotetext[titlenote]{This work was supported by the National Natural Science Foundation of China (62273201, 62350037), and the Research Fund for the Taishan Scholar Project of Shandong Province of China (TSTP20221103).
}


\author[1]{Tiantian Mu}
\ead{mutiantian@mail.sdu.edu.cn}

\author[1]{Jun-e Feng\corref{mycorrespondingauthor}}
\cortext[mycorrespondingauthor]{Corresponding author}
\ead{fengjune@sdu.edu.cn}

\author[2]{Biao Wang}
\ead{wangbiao@sdu.edu.cn}

\address[1]{School of Mathematics, Shandong University, Jinan 250100, Shandong, P. R. China.}
\address[2]{School of Management, Shandong University, Jinan 250100, Shandong, P. R. China.}

\begin{abstract}
This paper exploits bisimulation relations, generated by extracting the concept of morphisms between algebraic structures, to analyze set stabilization of Boolean control networks with lower complexity.
First,
for two kinds of bisimulation relations, called as weak bisimulation and strong bisimulation relations,
a novel verification method is provided  by constructing the bisimulation matrices.
Then the comparison for set stabilization of BCNs via two kinds of bisimulation methods is presented,
 which involves the dimensionality of quotient systems and dependency of the control laws on the original system.
Moreover, the proposed method is also applied to the analysis of probabilistic Boolean control networks to establish the unified analysis framework of bisimulations.
Finally, the validity of the obtained results is verified by the practical example.
\end{abstract}



\begin{keyword}
bisimulations, mode reduction, semi-tensor product of matrices, set stabilization,  state feedback controls
\end{keyword}

\end{frontmatter}

\section{Introduction}\label{sec1}

Logical operations are the main tools used to simulate human intelligence using computers, to perceive the environment, to acquire information and feedback results.
As a general model of logical systems, Boolean networks (BNs) \cite{Kauffman1969BNs} can effectively model various types of phenomena existing in biological systems, engineering fields and so on.
Focusing on the lack of effective tools to study the logical systems,
the semi-tensor product (STP) of matrices is created by professor Cheng and his collaborators to develop the cross-dimensional matrix theory  in \cite{Chengdaizhan2001morgan}.
Up to now,
it has achieved highly innovative results in issues of BNs,
such as reachability and controllability \cite{Lifangfei2018reach,Liangjinling2017control}, observability and detectability \cite{Guoyuqian2018obser,Liyifeng2023obser,Zhangkuizeobser2023}, stability and
stabilization \cite{Wangyong2024setstab}\cite{Zhangqiliang2020setstab}, output tracking \cite{Zhanganguo2021track}, and synchronization \cite{Zhangx2020outputtrack} \cite{Yangj2020synch}, etc.
Trivially, set stabilization is efficient to deal with the cases of observability,
stabilization, output tracking, (partial) synchronization and so on.

As a critical and fundamental issue in biological systems and engineering,
set stabilization exhibits the ability of systems to stabilize the target set from all initial states.
Existing results about set stabilization of Boolean control networks (BCNs), obtained by adding controls to BNs, are abundant and reliable,
which include but are not limited to the methods of the invariant subsets \cite{Guoyuqian2015setsta},
feedback controls \cite{Lifangfei2017setsta}, pinning controls \cite{Liurongjian2017setsta}\cite{Lixiaodong2020setsta},
sampled-data state feedback controls \cite{Zhushiyong2020setsta}, Lyapunov functions \cite{Chenbingquan2020setsta}
flipping mechanisms \cite{Duleihao2022setsta}, event-triggered controls \cite{Chenbing2023eventtrigg},
output-feedback controls \cite{Yangjian2024setsta}
and distributed controls \cite{Linlin2024setsta}.
However, all the cited techniques are restricted to the analysis of large-scale BNs.
In this regard, one promising direction is to effectively reduce the model complexity and improve simulation efficiency,
which is the scope of model reduction.

Model reduction aims to find a model with lower order and simpler structure to approximately characterize certain properties of the original system.
The related results are widely used in smart grids \cite{Lij2022modelreduction}, engineering \cite{AresdeParga2023modelreduction},
 biological system \cite{Koellermeier2024modelreduction}
 and other fields.
In the framework of STP, certain research results in model reduction of BNs have been derived,
which mainly utilize the graph theory \cite{Veliz-Cuba2014modelreduction}\cite{Motoyama2013modelreduction}
and Lyapunov theory \cite{Mengmin2016modelreduction}
to analyze topological structure, stability and other properties.
Among the bisimulation method of BCNs is also a common and reasonable method for model reduction,
aiming at realizing the mutual matching of state trajectories of the original and reduced systems
by bisimulation relations based on the state spaces of a pair of systems.
Currently, the results about bisimulation of BCNs are given in \cite{Lirui2018modelreduction}\cite{Lirui2021modelreduction}\cite{Lirui2022modelreduction}.
Among them, \cite{Lirui2018modelreduction}\cite{Lirui2021modelreduction} gave the criteria of bisimulations,
 as well as the calculation method of the maximum bisimulation relation contained in an equivalence relation.
In addition, the research scope of bisimulations is also expanded to probabilistic BCNs (PBCNs)  \cite{Lirui2022modelreduction},
BNs with impulsive effects \cite{Zhangqiliang2019modelreduction} and BNs with time delays  \cite{Yuw2023modelreduction}.
Nevertheless,
the analysis of bisimulation of BCNs and PBCNs
has still not formed a centralized framework,
due to the complex and diverse criteria.
Moreover,
there are currently several concepts of bisimulation of BCNs in \cite{Lirui2021modelreduction,Lirui2021quotients}.
As shown in \cite{Lirui2021quotients},
 it required that starting from the state pair in bisimulation relation,
the state trajectory pair can be matched mutually under any same input sequence.
But as the natural extension of the bisimulation relation proposed by \cite{Lirui2021quotients},
the bisimulation defined in \cite{Lirui2021modelreduction} does not require the condition for identical input sequence.
Unfortunately,
the existing results fall short towards to compare of the different bisimulation methods for the set stabilization problems,
 and based on which, design all kinds of feedback controllers.

In this paper,  we advocate
to investigate and compare the set stabilization of BCNs under two kinds of bisimulation methods,
as well as the design of all kinds of state feedback controllers.
The quantitative results are shown precisely as follows:

1) A new method judging two kinds of bisimulation relations
is provided by constructing bisimulation matrices,
bases on which, the maximum bisimulation relations contained in an equivalence relation is also obtained concisely, along with the minimal quotient system.

2) The comparison about set stabilization of BCNs under two kinds of bisimulation methods is presented,
which includes the dimensionality of the quotient systems,
and dependency of the control laws on the original system.

3) The proposed method has lower computational complexity in set stabilization of BCNs,
 and can also rise to the exploration of PBCNs,
thus forming a unified framework for analyzing bisimulation relations.

The rest of this work is constructed as follows. Section II introduces some notations,
definition of STP,  algebraic forms of BCNs and problem formulation.
Section III includes novel criteria of two kinds of bisimulation relations,
comparison of set stabilization of BCNs under two kinds of bisimulation methods,
as well as analysis of probabilistic bisimulations of PBCNs.
Section IV gives one practical example to verify
the effectiveness of the proposed results.
A brief conclusion is drawn in the final section.

\section{Preliminaries}

\subsection{Notations}

\begin{itemize}
\item~$\mathbb{N}:$ the set of all natural numbers.

\item~$\mathcal{D}$~$=\{0, 1\}$.

\item~$[m:n]$:~$=\{m,m+1,\ldots,n\}$, where $m,n\in\mathbb{N}$ and $m<n$.

\item~$\sim$:~two different expressions of the same thing.

\item~$\mathbb{R}^{n\times m} (\mathcal{B}^{n\times m}):$ the set of all $n\times m$ real (Boolean) matrices.

\item~${\mathrm{Col}}_{i}(A)({\mathrm{Row}}_{i}(A)):$ the $i$-th column (row) of matrix $A$.
\item~$\mathrm{Col}(A)$ $(\mathrm{Row}(A)):$ the column (row) set of matrix $A$.
\item~${[A]}_{i,j}={\mathrm{Row}}_{i}({\mathrm{Col}}_{j}(A))$.

\item~$\delta_{n}^{i}:={\mathrm{Col}}_i(I_{n})$,
$\Delta_{n}$:~$=\mathrm{Col}(I_n)$.
Simply, $\Delta$:~$=\Delta_2$.

\item~$\mathbf{1}_{n}:=\sum_{i=1}^{n}\delta_{n}^{i}$.

\item~$\mathbf{0}_{n}$: an $n$-dimensional zero column vector.

\item~$A\geq B$: ${[A]}_{i,j}\geq {[B]}_{i,j}$ for any $i$ and $j$.

\item~$A^\mathrm{T}\in\mathbb{R}^{n\times m}$: the transposition of matrix $A\in\mathbb{R}^{m\times n}$.


\item~$\delta_{n}[i_{1}, i_{2}, \cdots, i_{m}]=[\delta_{n}^{i_{1}}\ \delta_{n}^{i_{2}}\ \cdots\ \delta_{n}^{i_{m}}]$: the $n\times m$ logical matrix.

\item~$\mathcal{L}_{n\times m}$: the set of $n\times m$ logical matrices.

\item~$A\wedge B\in\mathcal{B}^{n\times m}$: ${[A\wedge B]}_{i,j}={[A]}_{i,j}\wedge {[B]}_{i,j}$,
where $A, B \in\mathcal{B}^{n\times m}$.

\item~$\mathrm{sgn}(A)\in \mathcal{B}^{n\times m}$: ${[\mathrm{sgn}(A)]}_{i,j}=1$ if ${[A]}_{i,j}\neq0$, and $0$ if ${[A]}_{i,j}=0$, where $A\in\mathbb{R}^{n\times m}$.

\item~$\mathcal{I}(S)$:~$=\sum_{i=1}^{k}\delta_n^{\alpha_i}:=\delta_n[\alpha_1\hat{+}\alpha_2\hat{+}\cdots\hat{+}\alpha_k]\in \mathcal{B}^{n\times 1}$,
where $S=\{\delta_n^{\alpha_i} |i\in[1:k]\}$.

\item~$|S|$~: the cardinality (number of the elements) of set $S$.

\item~$S^c$:~$=\Delta_n \setminus S$, where $S\subseteq \Delta_n$.

\item~$\otimes$: the Kronecker product

\item~$\ast$: the Khatri-Rao product.

\item~$\odot$: the Boolean product.

\end{itemize}

\subsection{The relevant knowledge of STP}

 The definition of STP is given firstly.
\begin{definition} {\rm\cite{Chengdaizhan2011b}}
Let $A\in \mathbb{R}^{m\times n}$, $B\in \mathbb{R}^{p\times q}$.
The STP of $A$ and $B$ is
\begin{equation}
 A\ltimes B=(A\otimes I_{s/n})(B\otimes I_{s/p}),
\end{equation}
 where $s$ is the least common multiple of $n$ and $p$.
 \end{definition}

In this paper, the default matrix product is STP, and symbol ``$\ltimes$'' is omitted,
unless otherwise specified.
Some properties of STP are presented to support the following discussion.

\begin{lemma}\label{le1}{\rm\cite{Chengdaizhan2011b}}
Let $A \in \mathbb{R}^{m\times n}$, $w\in \Delta_k$, $x\in \mathbb{R}^{m\times 1}, y\in\mathbb{R}^{n\times 1}$.
Then the following statements hold.

1) $wA=(I_k\otimes A)w.$

2) $w^2=\Phi_{k}w$, where $\Phi_{k}:=\mathrm{diag}\{\delta_k^1,\ldots,\delta_k^k\}$ is the $k$-valued power-reducing matrix.

3) $W_{[m,n]}xy=yx$,
 where $W_{[m,n]}=[I_n\otimes \delta_m^1,\ I_n\otimes \delta_m^2,\ldots, I_n\otimes \delta_m^m]$ be the swap matrix.
\end{lemma}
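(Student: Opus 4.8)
The plan is to prove all three identities directly from the definition of the STP, reducing each to an elementary statement about the ordinary Kronecker product. The two tools I would rely on are the mixed-product rule $(P\otimes Q)(R\otimes S)=(PR)\otimes(QS)$, valid whenever the ordinary products $PR$ and $QS$ are defined, and the observation that, since $w\in\Delta_k$, it suffices to treat $w=\delta_k^i$ and, where a general vector appears, to extend by (bi)linearity. In each case I first expand the STP by computing the relevant least common multiple, and then collapse the Kronecker structure using the mixed-product rule.

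For statement 1), I would expand the left-hand side first: with $w$ of size $k\times 1$ and $A$ of size $m\times n$, the governing dimension is $s=m$, so $wA=(w\otimes I_m)(A\otimes I_1)=(w\otimes I_m)A$. The delicate point is the right-hand side: $I_k\otimes A$ has $kn$ columns while $w$ has $k$ rows, so here $s=\mathrm{lcm}(kn,k)=kn$ and $(I_k\otimes A)w=(I_k\otimes A)(w\otimes I_n)$. Applying the mixed-product rule to each side then gives $(w\otimes I_m)A=w\otimes A$ and $(I_k\otimes A)(w\otimes I_n)=w\otimes A$, which proves the equality. Getting this dimension/$\mathrm{lcm}$ bookkeeping right on the right-hand side, where the left factor $w$ contributes $k$ rows rather than $1$, is the one place I expect to have to be careful.

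For statement 2), since $w=\delta_k^i$ I would compute $w^2=\delta_k^i\ltimes\delta_k^i=(\delta_k^i\otimes I_k)\delta_k^i=\delta_k^i\otimes\delta_k^i=\delta_{k^2}^{(i-1)k+i}$, and then read off the columns of $\Phi_k=\mathrm{diag}\{\delta_k^1,\ldots,\delta_k^k\}$ from its block-diagonal form: its $j$-th column places $\delta_k^j$ in the $j$-th block of $k$ rows, so $\mathrm{Col}_j(\Phi_k)=\delta_{k^2}^{(j-1)k+j}$, whence $\Phi_k w=\mathrm{Col}_i(\Phi_k)=\delta_{k^2}^{(i-1)k+i}=w^2$. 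For statement 3), I would note that $xy=x\otimes y$ and $yx=y\otimes x$ (each STP collapses to a Kronecker product because the left factor is a column vector), so the claim reduces to the defining swap identity $W_{[m,n]}(x\otimes y)=y\otimes x$; by bilinearity it is enough to check $x=\delta_m^a$, $y=\delta_n^b$, where $x\otimes y=\delta_{mn}^{(a-1)n+b}$ simply selects column $(a-1)n+b$ of $W_{[m,n]}$, and the block form $W_{[m,n]}=[I_n\otimes\delta_m^1,\ldots,I_n\otimes\delta_m^m]$ together with the column-indexing rule for Kronecker products identifies that column as $\delta_n^b\otimes\delta_m^a=y\otimes x$. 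The only genuinely error-prone part across all three items is the index and column bookkeeping for the Kronecker products; once the mixed-product rule is in place, the remaining algebra is routine.
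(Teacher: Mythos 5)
The paper states this lemma as a known result cited from the STP literature and gives no proof of its own, so there is nothing to compare against; judged on its own terms, your argument is correct. The dimension bookkeeping is right in each case (in particular $s=\mathrm{lcm}(kn,k)=kn$ on the right-hand side of 1), so that both sides collapse to $w\otimes A$), the column index $(i-1)k+i$ for $\Phi_k$ and $(a-1)n+b$ for the swap matrix are correct, and the reduction of each identity to the mixed-product rule plus linearity is exactly the standard verification.
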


Each $i\in\mathcal{D}$ is mapped to $\delta_2^{2-i}\in\Delta$, i.e., $1\sim \delta_2^1$ and $0\sim \delta_2^2$.
 Based on this, the algebraic expression of a logical function is proposed as follows.

\begin{lemma}\label{Lf}{\rm\cite{Chengdaizhan2011b}}
Let $f(X_{1},X_{2},\ldots,X_{n}):\mathcal{D}^n\mapsto \mathcal{D}$ be a logical function.
Then there exists a structure matrix $L_f \in \mathcal{L}_{2\times 2^{n}}$ such that
its algebraic form is expressed as follows:
\begin{equation}
f(X_{1},X_{2},\ldots,X_{n})\sim L_f\ltimes_{i=1}^{n}x_{i},
\end{equation}
where $x_i=\delta_2^{2-X_i}\in\Delta, i\in[1:n]$.
\end{lemma}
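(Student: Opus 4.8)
The plan is to prove the existence of $L_f$ constructively, by exhibiting a matrix whose columns simply record the value of $f$ on each of the $2^n$ possible truth assignments. The starting observation is that once each logical argument $X_i$ is replaced by its vector form $x_i=\delta_2^{2-X_i}\in\Delta$, the product $\ltimes_{i=1}^n x_i$ always lands in $\Delta_{2^n}$, and does so bijectively as $(X_1,\ldots,X_n)$ ranges over $\mathcal{D}^n$. Granting this, one places the vector form of the corresponding output $f(X_1,\ldots,X_n)$ into the matching column of $L_f$, and the identity $L_f\ltimes_{i=1}^n x_i\sim f(X_1,\ldots,X_n)$ follows because left-multiplying $L_f$ by a canonical basis vector $\delta_{2^n}^{j}$ selects its $j$-th column.

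First I would show that for column vectors the STP degenerates to the ordinary Kronecker product: if $a\in\mathbb{R}^{p\times 1}$ and $b\in\mathbb{R}^{q\times 1}$, then the least common multiple in Definition~1 is $s=\mathrm{lcm}(1,q)=q$, and by the mixed-product property of $\otimes$ one gets $a\ltimes b=(a\otimes I_q)(b\otimes I_1)=a\otimes b$. Iterating this gives $\ltimes_{i=1}^n x_i=x_1\otimes\cdots\otimes x_n$. Next I would evaluate this Kronecker product of the $\delta_2$'s: using the elementary identity $\delta_2^{u}\otimes\delta_{2^{n-1}}^{v}=\delta_{2^n}^{(u-1)2^{n-1}+v}$ to propagate the encoding one coordinate at a time, a direct induction on $n$ yields $\ltimes_{i=1}^n x_i=\delta_{2^n}^{\,j}$ with $j=2^n-\sum_{i=1}^n X_i\,2^{\,n-i}$, which is precisely the standard binary-to-index encoding. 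Since this map $(X_1,\ldots,X_n)\mapsto j$ is a bijection between $\mathcal{D}^n$ and $[1:2^n]$, each index corresponds to exactly one truth assignment.

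With the encoding fixed, I would define $L_f$ column by column: for each $j\in[1:2^n]$, set $\mathrm{Col}_j(L_f)=\delta_2^{2-f(X_1,\ldots,X_n)}$, where $(X_1,\ldots,X_n)$ is the unique assignment encoded by $j$. Every such column lies in $\Delta$, so $L_f\in\mathcal{L}_{2\times 2^n}$ by construction. Verifying the claimed identity is then immediate: for any input, $\ltimes_{i=1}^n x_i=\delta_{2^n}^{j}$, hence $L_f\ltimes_{i=1}^n x_i=\mathrm{Col}_j(L_f)=\delta_2^{2-f(X_1,\ldots,X_n)}\sim f(X_1,\ldots,X_n)$.

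The main obstacle is the middle step, namely establishing that $\ltimes_{i=1}^n x_i$ runs bijectively over $\Delta_{2^n}$; everything else is bookkeeping. This identity is what makes the column-wise construction well defined, because it guarantees that distinct truth assignments produce distinct basis vectors, so no column of $L_f$ is assigned conflicting values, while simultaneously ensuring that every column index is actually realized. The cleanest route remains the induction on $n$ built on $a\ltimes b=a\otimes b$ for columns together with the Kronecker index identity above.
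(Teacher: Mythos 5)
Your construction is correct: the reduction of the semi-tensor product to the Kronecker product for column vectors, the index formula $j=2^n-\sum_{i=1}^n X_i 2^{n-i}$ establishing the bijection $\mathcal{D}^n\to\Delta_{2^n}$, and the column-by-column definition of $L_f$ together give the standard proof of this lemma. The paper itself offers no proof --- it cites the result from the STP literature --- and your argument is essentially the canonical one given there, so there is nothing to reconcile.
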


 \begin{lemma}\label{KR}{\rm\cite{Chengdaizhan2011a}}
Suppose that
 \begin{equation*}
\left\{
\begin{array}{l}
y=M_{y}\ltimes_{i=1}^{n}x_{i},\\
z=M_{z}\ltimes_{i=1}^{n}x_{i},
\end{array}
\right.
\end{equation*}
where $y, z, x_{i}\in\Delta, i\in[1:n], M_{y}, M_{z}\in \mathcal{L}_{2\times 2^{n}}$. Then
\begin{equation*}
yz=(M_{y}\ast M_{z})\ltimes_{i=1}^{n}x_{i},
\end{equation*}
where ${\mathrm{Col}}_i(M_y\ast M_z)={\mathrm{Col}}_{i}(M_y)\otimes {\mathrm{Col}}_{i}(M_z), i\in[1:2^{n}]$.
\end{lemma}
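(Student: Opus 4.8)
The plan is to reduce the identity to a statement about a single column by using the fact that a semi-tensor product of logical vectors is again a logical vector. First I would set $x:=\ltimes_{i=1}^{n}x_i$. Since each $x_i\in\Delta=\Delta_2$, the product $x$ lies in $\Delta_{2^n}$, so $x=\delta_{2^n}^{j}$ for a uniquely determined index $j\in[1:2^n]$. With this notation the hypotheses become $y=M_y\delta_{2^n}^{j}=\mathrm{Col}_j(M_y)$ and $z=M_z\delta_{2^n}^{j}=\mathrm{Col}_j(M_z)$, because right-multiplication of a matrix by $\delta_{2^n}^{j}$ simply extracts its $j$-th column.

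Next I would evaluate the product $yz=y\ltimes z$. The crucial observation is that for column vectors the semi-tensor product collapses to an ordinary Kronecker product. Applying the definition of STP to $y,z\in\mathbb{R}^{2\times1}$, the relevant least common multiple is $\mathrm{lcm}(1,2)=2$, so $y\ltimes z=(y\otimes I_2)z$; expanding the block structure of $y\otimes I_2$ then gives $y\ltimes z=y\otimes z$. Hence $yz=\mathrm{Col}_j(M_y)\otimes\mathrm{Col}_j(M_z)$.

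Finally I would invoke the definition of the Khatri-Rao product, namely $\mathrm{Col}_j(M_y\ast M_z)=\mathrm{Col}_j(M_y)\otimes\mathrm{Col}_j(M_z)$. Combining this with the previous step yields $yz=\mathrm{Col}_j(M_y\ast M_z)=(M_y\ast M_z)\delta_{2^n}^{j}=(M_y\ast M_z)\ltimes_{i=1}^{n}x_i$, which is exactly the claimed identity. Since $j$ was arbitrary (determined by the choice of the $x_i$), the identity holds for all admissible inputs.

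The dimensional bookkeeping is routine; the only step that genuinely uses the definition of STP rather than formal column extraction is verifying $y\ltimes z=y\otimes z$ for logical column vectors, so that is where I would be most careful. Everything else follows from the observation that $\ltimes_{i=1}^{n}x_i$ is a canonical basis vector together with the column-wise definition of $\ast$.
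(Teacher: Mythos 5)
The paper gives no proof of this lemma — it is quoted verbatim from the cited reference — so there is nothing to compare against; your argument is correct and is the standard one. Reducing to $\ltimes_{i=1}^{n}x_i=\delta_{2^n}^{j}$, checking $y\ltimes z=y\otimes z$ for column vectors via $s=\mathrm{lcm}(1,2)=2$, and then invoking the column-wise definition of $\ast$ is exactly how this identity is established in the STP literature.
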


\subsection{The logical and algebraic forms of BCNs}

Consider BCN  with $n$ state nodes $\{X_i\in\mathcal{D}| i\in [1:n]\}$ and $m$ inputs $\{U_j\in \mathcal{D}| j\in[1:m]\}$,
\begin{equation}\label{BCNlog}
\left\{
\begin{aligned}
X_{1}(t+1)&=f_1(U_1(t),\ldots,U_m(t), X_1(t),\ldots,X_n(t)),\\
X_{2}(t+1)&=f_2(U_1(t),\ldots,U_m(t), X_1(t),\ldots,X_n(t)),\\
&\vdots\\
X_{n}(t+1)&=f_n(U_1(t),\ldots,U_m(t), X_1(t),\ldots,X_n(t)), t\in\mathbb{N},
\end{aligned}
\right.
\end{equation}
where $f_i:{\mathcal{D}}^{m+n}\mapsto \mathcal{D}$, $i\in[1:n]$.
Let $x_i(t)=\delta_2^{2-X_i(t)}\in \Delta$, $u_i(t)=\delta_2^{2-U_i(t)}\in\Delta$.
 Using STP, one has $x(t)=\ltimes_{i=1}^{n}x_{i}(t)\in\Delta_{N}$ and $u(t)=\ltimes_{j=1}^{n}u_{j}(t)\in\Delta_{M}$, where $N=2^n$ and $M=2^m$.
Then there exists $F_i\in \mathcal{L}_{2\times NM}$,
such that $x_i(t+1)=F_i u(t)x(t), i\in [1:n]$.
According to Lemma \ref{KR}, the algebraic form of BCN \eqref{BCNlog} is established as follows:
\begin{equation}{\label{BCNalg}}
x(t+1)=Fu(t)x(t), t\in\mathbb{N},
\end{equation}
where $F=F_1\ast F_2\ast\cdots \ast F_n\in \mathcal{L}_{N\times NM}$
is the state transition matrix of BCN \eqref{BCNlog}.

The input feedback law of BCN \eqref{BCNalg} is expressed as
\begin{equation}\label{ualg}
    u(t)=G x(t), t\in \mathbb{N},
\end{equation}
where $G\in\mathcal{L}_{M\times N}$.

\subsection{Problem formulation}

In this paper, we aim to analyze the set stabilization of BCNs by two types of bisimulation methods.
The set stabilization of the BCN reflects its ability to reach and remain continuously on the target set from any initial state after a finite time.
Denote by $x(t,x_0, \mathbf{u})$ the state at the $t$-th step of BCN \eqref{BCNalg} from initial state $x_0\in\Delta_N$, under a control sequence $\mathbf{u}={\{u(t)\}}_{t\in\mathbb{N}}$.
Then we give the following definition.

\begin{definition}{\rm \cite{Guoyuqian2015setsta}}\label{defsetstab}
Let $\mathcal{A}\subseteq\Delta_N$ .
BCN \eqref{BCNalg} is said to be $\mathcal{A}$-stabilizable,
if for any initial state $x_0\in\Delta_N$, there
exist a control sequence $\mathbf{u}={\{u(t)\}}_{t\in\mathbb{N}}$ and $T_{x_0}\in\mathbb{N}$,
such that
$x(t,x_0, \mathbf{u})\in\mathcal{A}, \forall t\geq T_{x_0}$.
\end{definition}

In what follows, we dedicate to analyzing the set stabilization,
by two kinds of bisimulation relations,
 called as weak bisimulation and strong bisimulation relations between BCN \eqref{BCNalg} and its replication system possessing the similar state transition matrix with BCN \eqref{BCNalg}.
The concepts are proposed as follows.

\begin{definition}{\rm \cite{Lirui2021modelreduction} (Weak Bisimulation)}\label{defbi-simulation}
A relation $\mathcal{R} \subseteq \Delta_{N}\times \Delta_N$ is called the wrong bisimulation relation between BCN \eqref{BCNalg} and its replication system,
 if for all
$(x, y) \in \mathcal{R}$, we have

(i) for every $u\in \Delta_{M}$, there exists a $v\in \Delta_{M}$,
such that
\begin{equation}\label{FuxFvy}
  (F u x, F v y)\in \mathcal{R}.
\end{equation}

(ii) for every $v\in \Delta_{M}$, there exists a $u \in \Delta_{M}$,
such that again \eqref{FuxFvy} holds.
\end{definition}

\begin{definition}{\rm \cite{Lirui2021quotients} (Strong Bisimulation)}\label{defbi-simulation2}
A relation $\mathcal{R} \subseteq \Delta_{N}\times \Delta_N$ is called the strong bisimulation relation between BCN \eqref{BCNalg} and its replication system,
 if for all
$(x, y) \in \mathcal{R}$, one has
\begin{equation}\label{FuxFuy}
  (F u x, F u y)\in \mathcal{R}, \forall u\in \Delta_{M}.
\end{equation}
\end{definition}

\begin{remark}
Note that $N, M$ in Definitions \ref{defbi-simulation} and \ref{defbi-simulation2} are not only powers of 2,
but can still be applied to any value.
\end{remark}

\begin{example}\label{exam-comparsionIandII}
Consider a BCN with state transition matrix $F=\delta_8[ 2\ 3\ 1\ 5\ 6\ 7\ 8\ 5\ 3$
$ 2\ 5\ 2\ 6\ 7\ 8\ 7]$,
where $N=8$ and $M=2$.
Suppose that an equivalence relation $\mathcal{R}=\mathcal{R}^1\cup \mathcal{R}^2\cup \mathcal{R}^3$,
where
\begin{equation}
\begin{aligned}
\mathcal{R}^1&=\{(\alpha_1, \alpha_2) | \alpha_1, \alpha_2\in\{\delta_8^1, \delta_8^2\}\},\ \mathcal{R}^2=\{(\alpha_1, \alpha_2) | \alpha_1, \alpha_2\in\{\delta_8^3, \delta_8^4\}\},\\
\mathcal{R}^3&=\{(\alpha_1, \alpha_2) | \alpha_1, \alpha_2\in\{\delta_8^5, \delta_8^6, \delta_8^7, \delta_8^8\}\}.
\end{aligned}
\end{equation}
Through verification,
 $\mathcal{R}$ is the weak bisimulation relation,
but not strong bisimulation relation.
Moreover, a strong bisimulation relation contained in $\mathcal{R}$ is derived,
that is $\{(\delta_8^1, \delta_8^1)\}\cup \{(\delta_8^2, \delta_8^2)\}\cup\{(\delta_8^3, \delta_8^3)\}\cup\{(\delta_8^4, \delta_8^4)\}\cup \mathcal{R}^3$.
\end{example}

\begin{remark}
As noted in Example \ref{exam-comparsionIandII},
it is evident that compared with the strong bisimulation relation in Definition \ref{defbi-simulation2},
the weak bisimulation relation in Definition \ref{defbi-simulation} has more relaxed condition,
based on which a larger bisimulation relation can be generated.
Hence, the strong bisimulation relation implies the weak bisimulation relation.
\end{remark}

\section{Main Results}

In this section, we first construct an equivalence relation (satisfying reflexivity, symmetry and transitivity) uniquely produced by a target set for stabilization.
Then we provide a new method to establish the quotient system of the original system by the weak bisimulation relation,
along with the effective dissemination of set stabilization between the original and quotient systems.
Meanwhile, we propose the state feedback control strategy for the original system,
by gathering the information from the quotient system.
Moreover, a comparative analysis is given between weak bisimulation and strong bisimulation methods.
Finally, the proposed method is applied for the analysis of PBCNs.

For target set $\mathcal{A}$ of stablization,
construct
\begin{equation}\label{SA}
    \mathcal{R}_{\mathcal{A}}=\{(\alpha_1,\alpha_2)| \{\alpha_1, \alpha_2\}\subseteq \mathcal{A}\ \mbox{or}\
    \{\alpha_1, \alpha_2\}\subseteq \mathcal{A}^c\}.
\end{equation}
Obviously, $\mathcal{R}_{\mathcal{A}}$ is an equivalence relation on $\Delta_N$.
And we construct $\Upsilon_{\mathcal{R}_{\mathcal{A}}}\subseteq {\mathcal{B}}_{N\times N}$ as the matrix form of $\mathcal{R}_{\mathcal{A}}$,
where
\begin{equation}\label{matrixformSA}
 {[\Upsilon_{\mathcal{R}_{\mathcal{A}}}]}_{i,j} =
 \left\{\begin{aligned}
     1,\ &\mbox{if}\ (\delta_{N}^i, \delta_{N}^j)\in \mathcal{R}_{\mathcal{A}},\\
     0,\ &\mbox{otherwise}.
 \end{aligned}
 \right.
\end{equation}

\subsection{The analysis for set stabilization of BCNs via weak bisimulation}

Consider BCN  \eqref{BCNalg}.
Let $\Psi_{1}=\mathrm{sgn}(F {\mathbf{1}}_{M})\in {\mathcal{B}}_{N\times N}$ be a one-step reachability matrix of BCN \eqref{BCNalg}, where ${[\Psi_{1}]}_{i,j}=1$ means that state $\delta_{N}^i$ is reachable at the first step from state $\delta_{N}^j$.
On this basis, we first introduce the necessary condition of the weak bisimulation relation between BCN  \eqref{BCNalg} and its replication system.

\begin{proposition}\label{probi-sim}
Suppose that equivalence relation $\mathcal{R}_{\mathcal{A}}\subseteq \Delta_N\times \Delta_N$ is a weak bisimulation relation between BCN \eqref{BCNalg} and its replication system.
If $(\delta_N^i,\delta_N^j)\in\mathcal{R}_{\mathcal{A}}$, then
\begin{equation}\label{equprobi-sim}
    {\mathrm{Col}_i(\Upsilon_{\mathcal{R}_{\mathcal{A}}}\odot\Psi_{1})}
    ={\mathrm{Col}_j(\Upsilon_{\mathcal{R}_{\mathcal{A}}}\odot\Psi_{1})}.
\end{equation}
\end{proposition}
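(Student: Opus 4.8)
The plan is to first decode the combinatorial content of the two matrices appearing in \eqref{equprobi-sim}, since once their meaning is clear the proposition becomes almost immediate. Writing $F=[F^{(1)},\ldots,F^{(M)}]$ as $M$ blocks of size $N\times N$, where $F^{(k)}=F\delta_M^k$ is the transition matrix under input $\delta_M^k$, I would note that $F\mathbf{1}_{M}=\sum_{k=1}^{M}F^{(k)}$, so that $[\Psi_1]_{k,j}=1$ holds exactly when $\delta_N^k=Fu\delta_N^j$ for some $u\in\Delta_M$, i.e. $\delta_N^k$ is one-step reachable from $\delta_N^j$. Expanding the Boolean product gives $[\Upsilon_{\mathcal{R}_{\mathcal{A}}}\odot\Psi_1]_{p,j}=\bigvee_{k}[\Upsilon_{\mathcal{R}_{\mathcal{A}}}]_{p,k}\wedge[\Psi_1]_{k,j}$, which equals $1$ precisely when some state lying in the equivalence class of $\delta_N^p$ is one-step reachable from $\delta_N^j$. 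Hence \eqref{equprobi-sim} simply asserts that $\delta_N^i$ and $\delta_N^j$ reach the same collection of $\mathcal{R}_{\mathcal{A}}$-classes in one step.

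With this reformulation in hand, I would prove the column equality entrywise by establishing $\mathrm{Col}_i(\Upsilon_{\mathcal{R}_{\mathcal{A}}}\odot\Psi_1)\le\mathrm{Col}_j(\Upsilon_{\mathcal{R}_{\mathcal{A}}}\odot\Psi_1)$ and then invoking symmetry. For the forward inclusion, suppose $[\Upsilon_{\mathcal{R}_{\mathcal{A}}}\odot\Psi_1]_{p,i}=1$, so there is an index $k$ with $\delta_N^k$ in the class of $\delta_N^p$ and an input $u$ with $Fu\delta_N^i=\delta_N^k$. Since $(\delta_N^i,\delta_N^j)\in\mathcal{R}_{\mathcal{A}}$ and $\mathcal{R}_{\mathcal{A}}$ is a weak bisimulation, clause (i) of Definition \ref{defbi-simulation} supplies a $v\in\Delta_M$ with $(Fu\delta_N^i,Fv\delta_N^j)\in\mathcal{R}_{\mathcal{A}}$. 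Setting $\delta_N^{k'}=Fv\delta_N^j$, transitivity of $\mathcal{R}_{\mathcal{A}}$ places $\delta_N^{k'}$ in the class of $\delta_N^p$, so $[\Upsilon_{\mathcal{R}_{\mathcal{A}}}]_{p,k'}=1$, while reachability from $\delta_N^j$ under $v$ gives $[\Psi_1]_{k',j}=1$; therefore $[\Upsilon_{\mathcal{R}_{\mathcal{A}}}\odot\Psi_1]_{p,j}=1$. The reverse inclusion is identical after swapping $i$ and $j$, using clause (ii) of Definition \ref{defbi-simulation} (equivalently clause (i) applied to the pair $(\delta_N^j,\delta_N^i)\in\mathcal{R}_{\mathcal{A}}$, which lies in the relation by symmetry). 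Combining the two inclusions yields \eqref{equprobi-sim}.

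The only delicate point I anticipate is the entry-level bookkeeping: one must keep the equivalence-class membership encoded by the rows of $\Upsilon_{\mathcal{R}_{\mathcal{A}}}$ cleanly separated from the reachability encoded by $\Psi_1$, and take care to route the witness state $\delta_N^{k'}$ produced by the bisimulation back to the correct row index $p$ through transitivity. Beyond this translation no genuine difficulty arises, since the two matching clauses of the weak bisimulation were designed precisely to guarantee successors within a common class. I would also remark that reflexivity of $\mathcal{R}_{\mathcal{A}}$ is never used; only its symmetry and transitivity, together with the two matching conditions, drive the argument.
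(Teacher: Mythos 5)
Your proposal is correct and follows essentially the same route as the paper's proof: an entrywise argument showing $\mathrm{Col}_i(\Upsilon_{\mathcal{R}_{\mathcal{A}}}\odot\Psi_1)\le\mathrm{Col}_j(\Upsilon_{\mathcal{R}_{\mathcal{A}}}\odot\Psi_1)$ by extracting a witness $k$ from the Boolean product, invoking clause (i) of Definition \ref{defbi-simulation} together with transitivity to relocate the successor $Fv\delta_N^j$ into the same class, and then closing the other inclusion by symmetry of $\mathcal{R}_{\mathcal{A}}$. The preliminary ``decoding'' of $\Psi_1$ and $\Upsilon_{\mathcal{R}_{\mathcal{A}}}\odot\Psi_1$ is a helpful gloss but does not change the substance of the argument.
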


\begin{proof}
For $(\delta_N^i,\delta_N^j)\in\mathcal{R}_{\mathcal{A}}$,
denote ${\mathrm{Col}_i(\Upsilon_{\mathcal{R}_{\mathcal{A}}}\odot\Psi_{1})}:={[\alpha_1\ \cdots\  \alpha_N]}^{\mathrm{T}}\in\mathcal{B}^{N\times 1}$ and $\mathrm{Col}_j(\Upsilon_{\mathcal{R}_{\mathcal{A}}}$ $\odot\Psi_{1}):={[\beta_1\ \cdots\  \beta_N]}^{\mathrm{T}}\in\mathcal{B}^{N\times 1}$.
 Then for any $l\in[1:N]$,
 if $\alpha_l={[\Upsilon_{\mathcal{R}_{\mathcal{A}}}\odot\Psi_{1}]}_{l,i}={\vee}_{k=1}^N {[\Upsilon_{\mathcal{R}_{\mathcal{A}}}]}_{l,k}$ ${[\Psi_{1}]}_{k,i}=1$,
 then there exists $k^*\in[1:N]$,
 such that ${[\Upsilon_{\mathcal{R}_{\mathcal{A}}}]}_{l,k^*}={[\Psi_{1}]}_{k^*,i}=1$.
 It means that there exists $u\in\Delta_{M}$,
 such that $\delta_N^{k^*}=Fu\delta_N^i$ and $(\delta_N^l,\delta_N^{k^*})\in \mathcal{R}_{\mathcal{A}}$.
 By condition (i) in Definition \ref{defbi-simulation} and the transitivity of $\mathcal{R}_{\mathcal{A}}$,
  there exists $v\in\Delta_M$, such that $Fv\delta_N^j:=\delta_N^p$, $(\delta_N^{k^*},\delta_N^p)\in \mathcal{R}_{\mathcal{A}}$ and $(\delta_N^l,\delta_N^p)\in \mathcal{R}_{\mathcal{A}}$,
  which implies that ${[\Upsilon_{\mathcal{R}_{\mathcal{A}}}]}_{l,p}={[\Psi_{1}]}_{p,j}=1$, \emph{i.e.,}
$\beta_l={[\Upsilon_{\mathcal{R}_{\mathcal{A}}}\odot\Psi_{1}]}_{l,j}={\vee}_{k=1}^N {[\Upsilon_{\mathcal{R}_{\mathcal{A}}}]}_{l,k}{[\Psi_{1}]}_{k,j}=1$.
Hence, we get that $\alpha_l\leq \beta_l, \forall l\in[1:N]$,
\emph{i.e.,} ${\mathrm{Col}_i(\Upsilon_{\mathcal{R}_{\mathcal{A}}}\odot\Psi_{1})}
\leq{\mathrm{Col}_i(\Upsilon_{\mathcal{R}_{\mathcal{A}}}\odot\Psi_{1})}$.
Since an equivalence relation $\mathcal{R}_{\mathcal{A}}\in\Delta_{N}\times \Delta_{N}$ satisfies symmetry,
 then by the same procedure above, we can prove that ${\mathrm{Col}_j(\Upsilon_{\mathcal{R}_{\mathcal{A}}}\odot\Psi_{1})}\leq
 {\mathrm{Col}_i(\Upsilon_{\mathcal{R}_{\mathcal{A}}}\odot\Psi_{1})}$.
 Hence, equation \eqref{equprobi-sim} holds.
\end{proof}

According to Proposition \ref{probi-sim},
 we construct $M_{\mathcal{R}_{\mathcal{A}},F}\in\mathcal{B}^{N\times N}$, as a weak bisimulation matrix  of BCN  \eqref{BCNalg},
where
\begin{equation}\label{Ms,psi}
    {[M_{\mathcal{R}_{\mathcal{A}},F}]}_{i,j}=\left\{
    \begin{aligned}
        1,\ &\mbox{if} \ \mathrm{Col}_i(\Upsilon_{\mathcal{R}_{\mathcal{A}}}\odot\Psi_{1})
        ={\mathrm{Col}_j(\Upsilon_{\mathcal{R}_{\mathcal{A}}}\odot\Psi_{1})},\\
        0,\ &\mbox{otherwise}.
    \end{aligned}
    \right.
\end{equation}

Then the criterion for the weak bisimulation relation can be formulated.

\begin{theorem}\label{theobi-sim}
An equivalence relation $\mathcal{R}_{\mathcal{A}}\subseteq\Delta_{N}\times \Delta_{N}$ in \eqref{SA} is a weak bisimulation relation between BCN \eqref{BCNalg} and its replication system, if and only if
\begin{equation}\label{equtheobi-sim}
\Upsilon_{\mathcal{R}_{\mathcal{A}}}\leq M_{\mathcal{R}_{\mathcal{A}},F}.
\end{equation}

\end{theorem}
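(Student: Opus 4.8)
The plan is to split the equivalence into its two directions and to notice that the forward implication is already essentially contained in Proposition \ref{probi-sim}. Reading the matrix inequality \eqref{equtheobi-sim} entrywise, $\Upsilon_{\mathcal{R}_{\mathcal{A}}}\leq M_{\mathcal{R}_{\mathcal{A}},F}$ says precisely that whenever $(\delta_N^i,\delta_N^j)\in\mathcal{R}_{\mathcal{A}}$ (i.e. $[\Upsilon_{\mathcal{R}_{\mathcal{A}}}]_{i,j}=1$) one has $\mathrm{Col}_i(\Upsilon_{\mathcal{R}_{\mathcal{A}}}\odot\Psi_1)=\mathrm{Col}_j(\Upsilon_{\mathcal{R}_{\mathcal{A}}}\odot\Psi_1)$ (i.e. $[M_{\mathcal{R}_{\mathcal{A}},F}]_{i,j}=1$); for unrelated pairs the inequality holds trivially since $[\Upsilon_{\mathcal{R}_{\mathcal{A}}}]_{i,j}=0$. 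Hence for necessity I would simply invoke Proposition \ref{probi-sim}: if $\mathcal{R}_{\mathcal{A}}$ is a weak bisimulation then \eqref{equprobi-sim} holds for every related pair, which is exactly \eqref{equtheobi-sim}.

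The substance of the proof is sufficiency. The key preliminary is to decode one entry of the Boolean product column: for any $l$, the $l$-th entry of $\mathrm{Col}_i(\Upsilon_{\mathcal{R}_{\mathcal{A}}}\odot\Psi_1)$ equals $\vee_{k=1}^N [\Upsilon_{\mathcal{R}_{\mathcal{A}}}]_{l,k}[\Psi_1]_{k,i}$, which is $1$ exactly when some one-step successor of $\delta_N^i$ lies in the same $\mathcal{R}_{\mathcal{A}}$-class as $\delta_N^l$. Assuming \eqref{equtheobi-sim}, I would fix an arbitrary related pair $(\delta_N^i,\delta_N^j)\in\mathcal{R}_{\mathcal{A}}$, so that the two columns coincide, and verify condition (i) of Definition \ref{defbi-simulation}. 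Given $u\in\Delta_M$, write $\delta_N^{k^*}=Fu\delta_N^i$, so $[\Psi_1]_{k^*,i}=1$; using reflexivity $[\Upsilon_{\mathcal{R}_{\mathcal{A}}}]_{k^*,k^*}=1$, the $k^*$-th entry of $\mathrm{Col}_i(\Upsilon_{\mathcal{R}_{\mathcal{A}}}\odot\Psi_1)$ is $1$. Column equality transfers this to $\mathrm{Col}_j$, producing an index $p$ with $[\Upsilon_{\mathcal{R}_{\mathcal{A}}}]_{k^*,p}=[\Psi_1]_{p,j}=1$; the latter yields a $v\in\Delta_M$ with $Fv\delta_N^j=\delta_N^p$, and the former gives $(\delta_N^{k^*},\delta_N^p)\in\mathcal{R}_{\mathcal{A}}$, i.e. $(Fu\delta_N^i,Fv\delta_N^j)\in\mathcal{R}_{\mathcal{A}}$, as required.

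Condition (ii) I would obtain by the symmetric argument: starting from an arbitrary $v\in\Delta_M$, set $\delta_N^{p^*}=Fv\delta_N^j$, seed the $p^*$-th entry of $\mathrm{Col}_j$ via reflexivity, transfer it back to $\mathrm{Col}_i$ through column equality, and recover a matching $u$; here I use the symmetry of $\mathcal{R}_{\mathcal{A}}$ (so that $\Upsilon_{\mathcal{R}_{\mathcal{A}}}$ is a symmetric matrix) together with the fact that the hypothesis \eqref{equtheobi-sim} is itself symmetric in $i,j$. I do not expect a genuine obstacle, since everything reduces to unwinding the Boolean product. The one point that needs care is the twofold use of reflexivity to produce the nonzero entry that is then propagated across the equal columns, and the correct translation of the reachability fact $[\Psi_1]_{k,i}=1$ back into the existence of an admissible control in $\Delta_M$.
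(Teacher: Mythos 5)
Your proposal is correct and follows essentially the same route as the paper's own proof: necessity by invoking Proposition \ref{probi-sim}, and sufficiency by unwinding the Boolean product $\Upsilon_{\mathcal{R}_{\mathcal{A}}}\odot\Psi_1$ entrywise, seeding the $k^*$-th entry via reflexivity and transferring it across the equal columns to extract the matching control $v$. Your explicit mention of reflexivity at the seeding step is a point the paper leaves implicit, but the argument is otherwise identical.
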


\begin{proof}
(Necessity):
Suppose that ${\mathcal{R}}_{\mathcal{A}}\subseteq\Delta_{N}\times \Delta_{N}$ is a weak bisimulation relation between BCN \eqref{BCNalg} and its replication system as per Definition \ref{defbi-simulation}.
For any $(\delta_{N}^i, \delta_{N}^j)\in \mathcal{R}_{\mathcal{A}}$,
one has ${[\Upsilon_{\mathcal{R}_{\mathcal{A}}}]}_{i,j}=1$.
By Proposition \ref{probi-sim},
we claim that
$\mathrm{Col}_i(\Upsilon_{\mathcal{R}_{\mathcal{A}}}\odot\Psi_{1})
=\mathrm{Col}_j(\Upsilon_{\mathcal{R}_{\mathcal{A}}}\odot\Psi_{1})$, \emph{i.e.,} ${[M_{\mathcal{R}_{\mathcal{A}},F}]}_{i,j}=1$.
Therefore, equation \eqref{equtheobi-sim} holds.

(Sufficiency):
For any $(\delta_{N}^i, \delta_{N}^j)\in \mathcal{R}_{\mathcal{A}}$,
 one has ${[\Upsilon_{\mathcal{R}_{\mathcal{A}}}]}_{i,j}=1$.
From equation \eqref{equtheobi-sim},
one can be referred that ${[M_{\mathcal{R}_{\mathcal{A}},F}]}_{i,j}=1$,
which equals that ${\mathrm{Col}_i(\Upsilon_{\mathcal{R}_{\mathcal{A}}}\odot\Psi_{1}})
={\mathrm{Col}_j(\Upsilon_{\mathcal{R}_{\mathcal{A}}}\odot\Psi_{1})}$.
For any $u\in\Delta_M$,
let $Fu\delta_N^i:=\delta_N^{k^*}$,
then one has ${[\Psi_{1}]}_{k^*,i}=1$.
On this basis,
it follows that
${[\Upsilon_{\mathcal{R}_{\mathcal{A}}}\odot\Psi_{1}]}_{k^*,i}
={[\Upsilon_{\mathcal{R}_{\mathcal{A}}}\odot\Psi_{1}]}_{k^*,j}={\vee}_{k=1}^N {[\Upsilon_{\mathcal{R}_{\mathcal{A}}}]}_{k^*,k}{[\Psi_{1}]}_{k,i}=1$.
As ${[\Upsilon_{\mathcal{R}_{\mathcal{A}}}\odot\Psi_{1}]}_{k^*,j}={\vee}_{k=1}^N {[\Upsilon_{\mathcal{R}_{\mathcal{A}}}]}_{k^*,k}{[\Psi_{1}]}_{k,j}=1$,
we can find $p^*\in[1:M]$, such that ${[\Upsilon_{\mathcal{R}_{\mathcal{A}}}]}_{k^*,p^*}={[\Psi_{1}]}_{p^*,j}=1$.
It means that there exists $v\in\Delta_M$,
such that $\delta_N^{p^*}=Fv\delta_N^j$ and $(\delta_N^{k^*},\delta_N^{p^*})=(Fu\delta_N^i, Fv\delta_N^j)\in \mathcal{R}_{\mathcal{A}}$,
which means that condition
(i) of Definition \ref{defbi-simulation} holds.
According to the symmetry of $\mathcal{R}_{\mathcal{A}}$,
 condition
(ii) of Definition \ref{defbi-simulation} is also satisfied.
The proof is completed.
\end{proof}

\begin{remark}
In fact,
the conclusion of \ref{theobi-sim} is identical to Theorem 3.1' in \cite{Lirui2021modelreduction},
represented as $\Upsilon_{\mathcal{R}_{\mathcal{A}}}\odot\Psi_{1}\geq \Psi_{1} \odot \Upsilon_{\mathcal{R}_{\mathcal{A}}}$.
Moreover,
Theorem 1 reveals the essential requirement of the weak bisimulation relation that
all states in the same equivalence class have the similar reachability to any equivalence class,
which cannot be reflected by Theorem 3.1 in \cite{Lirui2021modelreduction}.
\end{remark}

\begin{example}\label{exam1}
Consider a BCN proposed by \cite{Lirui2021modelreduction},
where
\begin{equation}
\begin{aligned}
  F=\delta_8[2\ 1\ 4 \ 5\ 6\ 7\ 8\ 4\ 1\ 1\ 8\ 5\ 6\ 7\ 8\ 4].
\end{aligned}
\end{equation}
Choose $\mathcal{R}=\{(\delta_8^{\alpha_1}, \delta_8^{\alpha_2}) |\{\alpha_1, \alpha_2\}\subseteq [1:2]\}\cup \{(\delta_8^{3}, \delta_8^{3})\}\cup\{(\delta_8^{\alpha_3}, \delta_8^{\alpha_4}) |\{\alpha_3, \alpha_4\}\subseteq [4:8]\}$.
It is obvious that $\mathcal{R}\subseteq \Delta_8\times \Delta_8$ is an equivalence relation on $\Delta_8$.
Next, we dedicate to analyzing whether $\mathcal{R}$ is a weak bisimulation relation.

First, for equivalence relation $\mathcal{R}$,
we construct its matrix form as
$\Upsilon_{\mathcal{R}}=\delta_8[1\hat{+}2,$ $ 1\hat{+}2,
3, 4\hat{+}5\hat{+}6\hat{+}7\hat{+}8, 4\hat{+}5\hat{+}6 $ $\hat{+}7\hat{+}8, 4\hat{+}5\hat{+}6\hat{+}7\hat{+}8, 4\hat{+}5\hat{+}6\hat{+}7\hat{+}8,
4\hat{+}5\hat{+}6\hat{+}7\hat{+}8]$.
By transition matrix $F$,
one has $\Psi_1=\mathrm{sgn}(F\mathbf{1}_2)=\delta_{8}[1\hat{+}2, 1, 4\hat{+}8, 5, 6,$ $7, 8, 4]$.
Then we calculate
\begin{equation}
    \begin{aligned}
     \Upsilon_{\mathcal{R}}\odot \Psi_1=\delta_8[&1\hat{+}2, 1\hat{+}2, 4\hat{+}5\hat{+}6\hat{+}7\hat{+}8, 4\hat{+}5\hat{+}6\hat{+}7\hat{+}8, 4\hat{+}5\hat{+}6\hat{+}7\hat{+}8,\\
    &4\hat{+}5\hat{+}6\hat{+}7\hat{+}8, 4\hat{+}5\hat{+}6\hat{+}7\hat{+}8, 4\hat{+}5\hat{+}6\hat{+}7\hat{+}8].
    \end{aligned}
\end{equation}
From equation \eqref{Ms,psi}, it follows that
\begin{equation*}
\begin{aligned}
    M_{\mathcal{R},F}=\delta_8[&1\hat{+}2, 1\hat{+}2, 3\hat{+}4\hat{+}5\hat{+}6\hat{+}7\hat{+}8,  3\hat{+}4\hat{+}5\hat{+}6\hat{+}7\hat{+}8, 3\hat{+}4\hat{+}5\hat{+}6\hat{+}7\hat{+}8,\\
    &3\hat{+}4\hat{+}5\hat{+}6\hat{+}7\hat{+}8, 3\hat{+}4\hat{+}5\hat{+}6\hat{+}7\hat{+}8, 3\hat{+}4\hat{+}5\hat{+}6\hat{+}7\hat{+}8]\geq \Upsilon_{\mathcal{R}}.
\end{aligned}
\end{equation*}
On the basis of Theorem \ref{theobi-sim},
one can be concluded that $\mathcal{R}$ is a weak bisimulation relation on $\Delta_8$.
\end{example}

Naturally, when $\mathcal{R}_{\mathcal{A}}$ does not satisfy the condition in Theorem  \ref{theobi-sim},
then the most crucial task is to find the maximum weak bisimulation relation contained in it.
It is worthy noting that literature \cite{Lirui2021modelreduction} confirms the existence of the maximum weak bisimulation relation contained in the given equivalence relation, which is stated as follows.

\begin{lemma}{\rm \cite{Lirui2021modelreduction}}
The set of all weak bisimulation relations between BCN \eqref{BCNalg} and its replication system included
in $\mathcal{R}_{\mathcal{A}}$ has a unique maximal element (with respect to set inclusion),
and the maximal element, expressed by $\mathcal{E}_{\mathcal{R}_\mathcal{A}}$, is also an equivalence relation on $\Delta_N$.
\end{lemma}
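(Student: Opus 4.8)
The plan is to follow the standard lattice-theoretic route for bisimulations, organized around three closure properties of the class of weak bisimulation relations contained in $\mathcal{R}_{\mathcal{A}}$: closure under arbitrary union, under relational inverse, and under relational composition. Existence and uniqueness of the maximal element will come from the first property, and the equivalence-relation structure from the remaining two together with maximality.

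First I would establish existence and uniqueness. Let $\Sigma$ denote the family of all weak bisimulation relations between BCN \eqref{BCNalg} and its replication system that are contained in $\mathcal{R}_{\mathcal{A}}$. This family is nonempty, since the diagonal $\{(x,x)\mid x\in\Delta_N\}$ trivially satisfies conditions (i) and (ii) of Definition \ref{defbi-simulation} (take $v=u$, respectively $u=v$) and lies in $\mathcal{R}_{\mathcal{A}}$ by reflexivity of the equivalence relation. The key observation is that $\Sigma$ is closed under arbitrary unions: if $(x,y)\in\bigcup_{\lambda}\mathcal{R}_\lambda$, then $(x,y)\in\mathcal{R}_\lambda$ for some $\lambda$, and the witness $v$ supplied by $\mathcal{R}_\lambda$ for a given $u$ already places $(Fux,Fvy)$ inside the union; the reverse condition is identical. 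Since each $\mathcal{R}_\lambda\subseteq\mathcal{R}_{\mathcal{A}}$, the union is again contained in $\mathcal{R}_{\mathcal{A}}$. Hence $\mathcal{E}_{\mathcal{R}_\mathcal{A}}:=\bigcup_{\mathcal{R}\in\Sigma}\mathcal{R}$ belongs to $\Sigma$ and is, by construction, the unique $\subseteq$-maximal element.

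It then remains to verify that $\mathcal{E}_{\mathcal{R}_\mathcal{A}}$ is an equivalence relation, which I would do by exploiting maximality. Reflexivity is immediate, since the diagonal is a member of $\Sigma$ and is therefore contained in $\mathcal{E}_{\mathcal{R}_\mathcal{A}}$. For symmetry, I would show that the inverse $\mathcal{R}^{-1}$ of any weak bisimulation $\mathcal{R}$ is again a weak bisimulation: conditions (i) and (ii) of Definition \ref{defbi-simulation} are simply interchanged under inversion, so the pair of conditions is preserved. Since $\mathcal{E}_{\mathcal{R}_\mathcal{A}}\subseteq\mathcal{R}_{\mathcal{A}}$ and $\mathcal{R}_{\mathcal{A}}$ is symmetric, $\mathcal{E}_{\mathcal{R}_\mathcal{A}}^{-1}\subseteq\mathcal{R}_{\mathcal{A}}$, so $\mathcal{E}_{\mathcal{R}_\mathcal{A}}^{-1}\in\Sigma$; maximality forces $\mathcal{E}_{\mathcal{R}_\mathcal{A}}^{-1}\subseteq\mathcal{E}_{\mathcal{R}_\mathcal{A}}$, and taking inverses of both sides gives the reverse inclusion, whence $\mathcal{E}_{\mathcal{R}_\mathcal{A}}=\mathcal{E}_{\mathcal{R}_\mathcal{A}}^{-1}$.

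For transitivity, I would prove that the composition $\mathcal{S}=\{(x,z)\mid\exists y,\,(x,y)\in\mathcal{R}_1,\,(y,z)\in\mathcal{R}_2\}$ of two weak bisimulations is a weak bisimulation, by chaining the matching conditions through the intermediate state $y$: for condition (i), a given $u$ produces $v$ via $\mathcal{R}_1$ on $(x,y)$ and then $w$ via $\mathcal{R}_2$ on $(y,z)$, so that $(Fux,Fwz)\in\mathcal{S}$ with witness $Fvy$; for condition (ii) the same chaining is run in the reverse order along $z\to y\to x$. Taking $\mathcal{R}_1=\mathcal{R}_2=\mathcal{E}_{\mathcal{R}_\mathcal{A}}$ and using transitivity of $\mathcal{R}_{\mathcal{A}}$ gives $\mathcal{S}\subseteq\mathcal{R}_{\mathcal{A}}\circ\mathcal{R}_{\mathcal{A}}\subseteq\mathcal{R}_{\mathcal{A}}$, so $\mathcal{S}\in\Sigma$ and maximality yields $\mathcal{S}\subseteq\mathcal{E}_{\mathcal{R}_\mathcal{A}}$, i.e., transitivity. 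The hard part will be precisely this composition step: unlike the union argument, it genuinely requires both halves (i) and (ii) of the weak bisimulation definition, and careful bookkeeping of which existential witness is chosen at each stage, since the quantifier alternation in conditions (i) and (ii) runs in opposite directions along the chain $x\to y\to z$.
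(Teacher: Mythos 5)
Your argument is correct: nonemptiness via the diagonal, closure of the family under arbitrary unions for existence and uniqueness of the maximum, and closure under inverse and composition combined with maximality for symmetry and transitivity, with the quantifier chaining in the composition step handled properly. The paper itself states this lemma without proof, deferring to the cited reference, and your proof is exactly the standard lattice-theoretic argument used there, so there is nothing to flag.
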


In the following, we aim to calculate $\mathcal{E}_{\mathcal{R}_\mathcal{A}}$ by virtue of constructing a series of equivalence relation ${\{\mathcal{R}_k\}}_{k\in\mathbb{N}}$,
 where their matrix forms are shown as follows
 \begin{equation}\label{Sk}
 \left\{
 \begin{aligned}
     \Upsilon_{\mathcal{R}_0}&= \Upsilon_{\mathcal{R}_{\mathcal{A}}}\in\mathcal{B}^{N\times N},\\
     \Upsilon_{\mathcal{R}_{k+1}}&=\Upsilon_{\mathcal{R}_{k}}\wedge M_{\mathcal{R}_{k}, F}\in \mathcal{B}^{N\times N}, k\geq 1.
     \end{aligned}
     \right.
 \end{equation}
It is easy to see that $I_N\leq \Upsilon_{\mathcal{R}_{k+1}}\leq  \Upsilon_{\mathcal{R}_{k}}, \forall k\in\mathbb{N}$.
Since $\mathcal{B}^{N\times N}$ is a finite set of matrices,
then there exists $k^*\in\mathbb{N}$, such that
$k^*={\mathrm{arcmin}}_{k\in\mathbb{N}} \{\Upsilon_{\mathcal{R}_{k}}=\Upsilon_{\mathcal{R}_{k+1}}\}$.
On this basis, we give an approach to calculate the maximum weak bisimulation relation between BCN \eqref{BCNalg} and its replication system contained in an equivalence relation.

\begin{proposition}\label{propomaxbi-sim}
Let $k^*={\mathrm{arcmin}}_{k\in\mathbb{N}} \{\Upsilon_{\mathcal{R}_{k}}$ $=\Upsilon_{\mathcal{R}_{k+1}}\}$,
then $\mathcal{E}_{\mathcal{R}_{\mathcal{A}}}=\mathcal{R}_{k^*}$.
\end{proposition}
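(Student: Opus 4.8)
The plan is to establish the two inclusions $\mathcal{R}_{k^*}\subseteq\mathcal{E}_{\mathcal{R}_{\mathcal{A}}}$ and $\mathcal{E}_{\mathcal{R}_{\mathcal{A}}}\subseteq\mathcal{R}_{k^*}$ separately. A preliminary fact I would record first is that every $\mathcal{R}_k$ is an equivalence relation: the matrix $M_{\mathcal{R}_k,F}$ encodes the relation ``$\mathrm{Col}_i(\Upsilon_{\mathcal{R}_k}\odot\Psi_{1})=\mathrm{Col}_j(\Upsilon_{\mathcal{R}_k}\odot\Psi_{1})$'', which is reflexive, symmetric and transitive, and the Boolean meet $\wedge$ of two equivalence-relation matrices is again an equivalence-relation matrix. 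Hence by induction on $k$ each $\mathcal{R}_k$ is an equivalence relation, so that Theorem \ref{theobi-sim} and Proposition \ref{probi-sim} (whose proofs invoke only reflexivity, symmetry and transitivity, never the special form \eqref{SA}) apply verbatim to every $\mathcal{R}_k$.

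For the first inclusion I would exploit the stopping condition. At $k^*$ we have $\Upsilon_{\mathcal{R}_{k^*}}=\Upsilon_{\mathcal{R}_{k^*+1}}=\Upsilon_{\mathcal{R}_{k^*}}\wedge M_{\mathcal{R}_{k^*},F}$, which forces $\Upsilon_{\mathcal{R}_{k^*}}\leq M_{\mathcal{R}_{k^*},F}$. By Theorem \ref{theobi-sim} this is exactly the criterion for $\mathcal{R}_{k^*}$ to be a weak bisimulation relation. Since the monotone chain $I_N\leq\Upsilon_{\mathcal{R}_{k+1}}\leq\Upsilon_{\mathcal{R}_k}$ gives $\mathcal{R}_{k^*}\subseteq\mathcal{R}_0=\mathcal{R}_{\mathcal{A}}$, the relation $\mathcal{R}_{k^*}$ is a weak bisimulation relation contained in $\mathcal{R}_{\mathcal{A}}$, and maximality of $\mathcal{E}_{\mathcal{R}_{\mathcal{A}}}$ yields $\mathcal{R}_{k^*}\subseteq\mathcal{E}_{\mathcal{R}_{\mathcal{A}}}$.

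The reverse inclusion is the heart of the argument, and I would prove $\mathcal{E}_{\mathcal{R}_{\mathcal{A}}}\subseteq\mathcal{R}_k$ for every $k$ by induction, the base case $k=0$ holding by definition of $\mathcal{E}_{\mathcal{R}_{\mathcal{A}}}$. Assuming $\Upsilon_{\mathcal{E}_{\mathcal{R}_{\mathcal{A}}}}\leq\Upsilon_{\mathcal{R}_k}$, the step reduces, via $\Upsilon_{\mathcal{R}_{k+1}}=\Upsilon_{\mathcal{R}_k}\wedge M_{\mathcal{R}_k,F}$, to proving $\Upsilon_{\mathcal{E}_{\mathcal{R}_{\mathcal{A}}}}\leq M_{\mathcal{R}_k,F}$; that is, for $(\delta_N^i,\delta_N^j)\in\mathcal{E}_{\mathcal{R}_{\mathcal{A}}}$ I must show $\mathrm{Col}_i(\Upsilon_{\mathcal{R}_k}\odot\Psi_{1})=\mathrm{Col}_j(\Upsilon_{\mathcal{R}_k}\odot\Psi_{1})$. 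This is a hybrid of Proposition \ref{probi-sim}: if $[\Upsilon_{\mathcal{R}_k}\odot\Psi_{1}]_{l,i}=1$, choose $k^{\sharp}$ with $[\Upsilon_{\mathcal{R}_k}]_{l,k^{\sharp}}=[\Psi_{1}]_{k^{\sharp},i}=1$, so $\delta_N^{k^{\sharp}}=Fu\delta_N^i$ for some $u\in\Delta_M$ and $(\delta_N^l,\delta_N^{k^{\sharp}})\in\mathcal{R}_k$. Because $\mathcal{E}_{\mathcal{R}_{\mathcal{A}}}$ is a weak bisimulation and $(\delta_N^i,\delta_N^j)\in\mathcal{E}_{\mathcal{R}_{\mathcal{A}}}$, condition (i) supplies $v$ with $(Fu\delta_N^i,Fv\delta_N^j)=(\delta_N^{k^{\sharp}},\delta_N^p)\in\mathcal{E}_{\mathcal{R}_{\mathcal{A}}}\subseteq\mathcal{R}_k$; transitivity of $\mathcal{R}_k$ applied to $(\delta_N^l,\delta_N^{k^{\sharp}})$ and $(\delta_N^{k^{\sharp}},\delta_N^p)$ then gives $(\delta_N^l,\delta_N^p)\in\mathcal{R}_k$ with $[\Psi_{1}]_{p,j}=1$, whence $[\Upsilon_{\mathcal{R}_k}\odot\Psi_{1}]_{l,j}=1$. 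Symmetry of $\mathcal{E}_{\mathcal{R}_{\mathcal{A}}}$ delivers the opposite inequality, so the two columns coincide.

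I expect this last monotonicity step to be the main obstacle, since it is not a direct citation of Proposition \ref{probi-sim} but a mixed version in which the bisimulation matching is provided by the smaller relation $\mathcal{E}_{\mathcal{R}_{\mathcal{A}}}$ while the column equality is read off the larger relation $\mathcal{R}_k$; the bridge between the two is precisely the induction hypothesis $\mathcal{E}_{\mathcal{R}_{\mathcal{A}}}\subseteq\mathcal{R}_k$ together with transitivity of $\mathcal{R}_k$. Granting it, the induction gives $\mathcal{E}_{\mathcal{R}_{\mathcal{A}}}\subseteq\mathcal{R}_{k^*}$, and combined with the first inclusion we conclude $\mathcal{E}_{\mathcal{R}_{\mathcal{A}}}=\mathcal{R}_{k^*}$.
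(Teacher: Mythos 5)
Your proof is correct, and its overall skeleton matches the paper's: both halves (the fixed point $\mathcal{R}_{k^*}$ is a weak bisimulation contained in $\mathcal{R}_{\mathcal{A}}$ by the stopping condition and Theorem \ref{theobi-sim}; every weak bisimulation contained in $\mathcal{R}_{\mathcal{A}}$ stays below every $\Upsilon_{\mathcal{R}_k}$ by induction) appear in the paper, with the second half phrased there as a proof by contradiction. The genuine difference is in how the inductive step is justified. The paper writes $\Upsilon_{\widetilde{\mathcal{R}}}\leq M_{\widetilde{\mathcal{R}},F}\leq M_{\mathcal{R}_{\mathcal{A}},F}$ and iterates, so its argument hinges on monotonicity of the map $\mathcal{R}\mapsto M_{\mathcal{R},F}$, which is asserted without proof; since $M_{\mathcal{R},F}$ is defined by an \emph{equality} of Boolean columns, this monotonicity is not automatic (it does hold when $\widetilde{\mathcal{R}}$ is reflexive and the ambient relation is transitive, because then $\Upsilon_{\mathcal{R}_k}\odot\Upsilon_{\widetilde{\mathcal{R}}}=\Upsilon_{\mathcal{R}_k}$, but none of this is spelled out, and it fails for a non-reflexive $\widetilde{\mathcal{R}}$). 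Your hybrid argument sidesteps the issue entirely: you never form $M_{\mathcal{E}_{\mathcal{R}_{\mathcal{A}}},F}$, but instead show $\Upsilon_{\mathcal{E}_{\mathcal{R}_{\mathcal{A}}}}\leq M_{\mathcal{R}_k,F}$ directly, running the matching of Definition \ref{defbi-simulation} inside $\mathcal{E}_{\mathcal{R}_{\mathcal{A}}}$ while reading the column equality in the larger relation $\mathcal{R}_k$, with the induction hypothesis and transitivity of $\mathcal{R}_k$ as the glue. What this buys is a self-contained, element-wise verification of exactly the step the paper leaves implicit, at the modest cost of the preliminary observation (which you correctly record) that each $\mathcal{R}_k$ is an equivalence relation and that Theorem \ref{theobi-sim} uses only reflexivity, symmetry and transitivity rather than the special form \eqref{SA}. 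Your version can fairly be read as a repaired and more explicit form of the paper's proof.
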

\begin{proof}
From $\Upsilon_{\mathcal{R}_{k^*}}=\Upsilon_{\mathcal{R}_{k^*+1}}=\Upsilon_{\mathcal{R}_{k^*}}\wedge M_{\mathcal{R}_{k^*},F}$,
 we get that $\Upsilon_{\mathcal{R}_{k^*}}\leq M_{\mathcal{R}_{k^*},F}$.
According to Theorem \ref{theobi-sim},
it follows that
$\mathcal{R}_{k^*}$ is a weak bisimulation relation contained in $\mathcal{R}_{\mathcal{A}}$.
Then  we aim to verify that $\mathcal{R}_{k^*}$ is the maximum weak bisimulation relation contained in $\mathcal{R}_{\mathcal{A}}$.
By contradiction, suppose that $\widetilde{\mathcal{R}}\subseteq \mathcal{R}_{\mathcal{A}}$ is a weak bisimulation relation satisfying $\widetilde{\mathcal{R}}\not\subseteq \mathcal{R}_{k^*}$.
According to Theorem \ref{theobi-sim},
it follows that $\Upsilon_{\widetilde{\mathcal{R}}}\leq M_{\widetilde{\mathcal{R}},F}\leq M_{\mathcal{R}_{\mathcal{A}},F}$,
which means that $\Upsilon_{\widetilde{\mathcal{R}}}=\Upsilon_{\widetilde{\mathcal{R}}}\wedge M_{\widetilde{\mathcal{R}},F}\leq \Upsilon_{\mathcal{R}_{\mathcal{A}}} \wedge M_{\mathcal{R}_{\mathcal{A}},F}=\Upsilon_{{\mathcal{R}}_1}$.
Repeating this process, it can be obtained that $\Upsilon_{\widetilde{\mathcal{R}}}\leq \Upsilon_{{\mathcal{R}}_k}, \forall k\in \mathbb{N}$,
which implies that $\widetilde{\mathcal{R}}\subseteq \mathcal{R}_{k^*}$.
This is a contradiction.
Hence, $\mathcal{R}_{k^*}$ is the maximum weak bisimulation relation contained in $\mathcal{R}_{\mathcal{A}}$.
\end{proof}

\begin{remark}
It is noticed that the maximum weak bisimulation relation obtained by this paper
is consistent with the result of Theorem 4.2 in \cite{Lirui2021modelreduction}.
However, compared to the multiply calculation procedures in \cite{Lirui2021modelreduction} with complexity  $\mathcal{O}(N^4)$,
the complexity of this novel method is $\mathcal{O}(N^3)$.
\end{remark}

Let $N_{\mathcal{A}}$ be the number of the equivalence classes in ${\mathcal{E}}_{\mathcal{R}_{\mathcal{A}}}$.
 As noted in \cite{Lirui2021modelreduction},
the quotient system $\Sigma_{I}$
is established with the equivalence classes in ${\mathcal{E}}_{\mathcal{R}_{\mathcal{A}}}$ as the states,
and its one-step reachability matrix is expressed by
 \begin{equation}\label{quosysPsi1}
  \Psi_1^{[I]} =\Upsilon_{\mathcal{R}_I} \odot\Psi_1\odot {\Upsilon_{\mathcal{R}_I}}^{\mathrm{T}}
  \in\mathcal{B}^{N_{\mathcal{A}}\times N_{\mathcal{A}}},
 \end{equation}
where $\Upsilon_{\mathcal{R}_I}\in \mathcal{L}_{N_{\mathcal{A}}\times N}$ is obtained from
$\Upsilon_{{\mathcal{E}}_{\mathcal{R}_{\mathcal{A}}}}$ by collapsing the identical rows,
with $\mathcal{R}_I=\{(\delta_{N}^i, \delta_{N_\mathcal{A}}^j)\in \Delta_N \times \Delta_{N_\mathcal{A}} |{[\Upsilon_{\mathcal{R}_I}]}_{j,i}=1\}$.

In the following, we analyze the propagation of set stabilization between BCN \eqref{BCNalg} and its  quotient system $\Sigma_{I}$,
and design all kinds of state feedback controls for BCN \eqref{BCNalg} by analyzing quotient system $\Sigma_{I}$.

\begin{theorem}\label{theo-setstaBCNquo}
 BCN \eqref{BCNalg} is $\mathcal{A}$-stabilization,
if and only if
system $\Sigma_{I}$ is $\mathcal{A}_{I}$-stabilization,
where
$\mathcal{A}_{I}=\{\beta\in\Delta_{N_{\mathcal{A}}} | \exists \alpha\in \mathcal{A}, \ \mbox{s.t.,}\ \beta =\Upsilon_{\mathcal{R}_I} \alpha\}$.
\end{theorem}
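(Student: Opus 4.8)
The plan is to reduce the equivalence to two transfers of trajectories between BCN \eqref{BCNalg} and $\Sigma_I$: projecting a stabilizing trajectory of the original system down to the quotient (for necessity), and lifting a stabilizing path of the quotient back up to the original system (for sufficiency). The first thing I would establish is a \emph{saturation} fact about the target set. Since $\mathcal{E}_{\mathcal{R}_{\mathcal{A}}}\subseteq\mathcal{R}_{\mathcal{A}}$ and $\mathcal{R}_{\mathcal{A}}$ in \eqref{SA} splits $\Delta_N$ into exactly the two blocks $\mathcal{A}$ and $\mathcal{A}^c$, every equivalence class of $\mathcal{E}_{\mathcal{R}_{\mathcal{A}}}$ lies entirely in $\mathcal{A}$ or entirely in $\mathcal{A}^c$. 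Hence $\mathcal{A}$ is a union of classes, and by the definitions of $\Upsilon_{\mathcal{R}_I}$ and of $\mathcal{A}_I$ one obtains the membership equivalence
\begin{equation*}
\delta_N^i\in\mathcal{A}\iff \Upsilon_{\mathcal{R}_I}\delta_N^i\in\mathcal{A}_I ,
\end{equation*}
so the projection $\Upsilon_{\mathcal{R}_I}$ carries $\mathcal{A}$ onto $\mathcal{A}_I$ and $\mathcal{A}^c$ onto $\mathcal{A}_I^c$. This is the bridge converting ``$x(t)\in\mathcal{A}$'' into ``the projected state lies in $\mathcal{A}_I$'' and conversely.

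Next I would record the reachability fact that powers the trajectory transfers. By Theorem \ref{theobi-sim} and the remark following it, states in the same class have identical class-reachability, so a one-step transition $\beta\to\gamma$ of $\Sigma_I$ (that is, $[\Psi_1^{[I]}]_{\gamma,\beta}=1$) holds \emph{if and only if} for \textbf{every} representative $x$ with $\Upsilon_{\mathcal{R}_I}x=\beta$ there exists $u\in\Delta_M$ with $\Upsilon_{\mathcal{R}_I}(Fux)=\gamma$. I would isolate this as the key reachability-lifting property, emphasizing that it holds from each individual state, not merely from some representative. With it, necessity is routine: given $\mathcal{A}$-stabilizability, for any quotient state $\beta_0$ pick $x_0$ with $\Upsilon_{\mathcal{R}_I}x_0=\beta_0$, take the stabilizing control sequence $\mathbf{u}$ with $x(t,x_0,\mathbf{u})\in\mathcal{A}$ for $t\ge T_{x_0}$, and set $\beta(t)=\Upsilon_{\mathcal{R}_I}x(t,x_0,\mathbf{u})$. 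Each step $\beta(t)\to\beta(t+1)$ is a legal transition of $\Sigma_I$, and by the membership equivalence $\beta(t)\in\mathcal{A}_I$ for all $t\ge T_{x_0}$, so $\Sigma_I$ is $\mathcal{A}_I$-stabilizable.

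For sufficiency I would argue by lifting. Given $\mathcal{A}_I$-stabilizability and any initial $x_0$, set $\beta_0=\Upsilon_{\mathcal{R}_I}x_0$ and take a stabilizing path $\beta_0,\beta_1,\dots$ of $\Sigma_I$ with $\beta_t\in\mathcal{A}_I$ for $t\ge T$. Using the reachability-lifting property inductively, I would choose $u_0$ with $\Upsilon_{\mathcal{R}_I}(Fu_0x_0)=\beta_1$, then repeat from $x(1)=Fu_0x_0$, constructing a control sequence whose trajectory satisfies $\Upsilon_{\mathcal{R}_I}x(t,x_0,\mathbf{u})=\beta_t$ for all $t$; by the membership equivalence this trajectory lies in $\mathcal{A}$ for $t\ge T$, giving $\mathcal{A}$-stabilizability. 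The main obstacle is exactly this sufficiency step: one must guarantee that the quotient path is realizable starting from the \emph{given} $x_0$ and not merely from some convenient representative of $\beta_0$. This is precisely where the weak bisimulation property is indispensable, since it is the ``same reachability for every state in a class'' conclusion of Theorem \ref{theobi-sim} that makes the inductive lifting well posed at every step.
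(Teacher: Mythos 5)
Your proof is correct and follows essentially the same route as the paper's: both directions are trajectory transfers across the bisimulation relation $\mathcal{R}_I$ (projection of a stabilizing trajectory for necessity, inductive lifting of a stabilizing quotient path for sufficiency), combined with the fact that $\mathcal{A}$ is a union of $\mathcal{E}_{\mathcal{R}_{\mathcal{A}}}$-classes so that membership in $\mathcal{A}$ and in $\mathcal{A}_I$ correspond. The only difference is that you make explicit two steps the paper leaves implicit --- the saturation/membership equivalence and the fact that the one-step lifting works from \emph{every} representative of a class, not just some convenient one --- which is a clarification rather than a different argument.
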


 \begin{proof}
Denote by $x(t; x_0; {\{u(k)\}}_{k=0}^{t-1})$ $(\widetilde{x}(t; \widetilde{x}_0; {\{v(k)\}}_{k=0}^{t-1}))$ the state at the $t$-th step of BCN \eqref{BCNalg} (system $\Sigma_{I}$) with
initial state $x_0$ $(\widetilde{x}_0)$ and control sequence ${\{u(k)\}}_{k=0}^{t-1} $ $({\{v(k)\}}_{k=0}^{t-1})$.

(Sufficiency):
For any $x_0\in\Delta_N$,
there exists $\widetilde{x}_0\in\Delta_{N_{\mathcal{A}}}$,
such that $(x_0, \widetilde{x}_0)\in \mathcal{R}_I$.
By $\mathcal{A}_{I}$-stabilization of the quotient system,
then there exists an input sequence ${\{v(t)\}}_{t\in\mathbb{N}}$ and $T_{\widetilde{x}_0}$,
such that $\widetilde{x}(t; \widetilde{x}_0; {\{v(k)\}}_{k=0}^{t-1})\in \mathcal{A}_{I}, \forall t\geq T_{\widetilde{x}_0}$.
Since $\mathcal{R}_I$ is a bisimulation relation between BCN \eqref{BCNalg} and system $\Sigma_{I}$,
then one has an input sequence ${\{u(t)\}}_{t\in\mathbb{N}}$,
such that $(x(t; x_0; {\{u(k)\}}_{k=0}^{t-1}),$ $ \widetilde{x}(t; \widetilde{x}_0; {\{v(k)\}}_{k=0}^{t-1}))\in \mathcal{R}_I, \forall t\in\mathbb{N}$.
It follows from the character of $\mathcal{A}_{I}$ that $x(t; x_0; {\{u(k)\}}_{k=0}^{t-1})\in \mathcal{A}, \forall t\geq T_{\widetilde{x}_0}$.
Due to the arbitrariness of $x_0\in\Delta_N$,
one can be concluded that BCN \eqref{BCNalg} is $\mathcal{A}$-stabilization.

(Necessity):
For any $\widetilde{x}_0\in\Delta_{N_{\mathcal{A}}}$,
there exists $x_0\in\Delta_{N}$,
such that $(x_0, \widetilde{x}_0)\in \mathcal{R}_I$.
By $\mathcal{A}$-stabilization of BCN \eqref{BCNalg},
then there exists an input sequence ${\{u(t)\}}_{t\in\mathbb{N}}$ and $T_{x_0}$,
such that $x(t; x_0; {\{u(k)\}}_{k=0}^{t-1})\in {\mathcal{A}}, \forall t\geq T_{x_0}$.
Since $\mathcal{R}_I$ is a bisimulation relation between BCN \eqref{BCNalg} and system $\Sigma_{I}$,
then one has an input sequence ${\{v(t)\}}_{t\in\mathbb{N}}$,
such that $(x(t; x_0; {\{u(k)\}}_{k=0}^{t-1}), \widetilde{x}(t; \widetilde{x}_0;$ $ {\{v(k)\}}_{k=0}^{t-1}))\in \mathcal{R}_I, \forall t\in\mathbb{N}$,
which implies that $\widetilde{x}(t; \widetilde{x}_0; {\{v(k)\}}_{k=0}^{t-1})\in \mathcal{A}_{I}, \forall t\in T_{x_0}$.
Hence, we get that quotient system $\sum_I$ is $\mathcal{A}_I$-stabilization.
 \end{proof}

On the basis of Theorem \ref{theo-setstaBCNquo},
 the mandate focus
to investigate $\mathcal{A}_{I}$-stabilization of system $\Sigma_{I}$.
Denote by $\Gamma_{\mathcal{A}_{I}}\in \mathcal{B}^{N_{\mathcal{A}}\times N_{\mathcal{A}}}$ the matrix form of set $\mathcal{A}_{I}$,
where
\begin{equation}\label{GammaA}
\mathrm{Col}_i(\Gamma_{\mathcal{A}_{I}})=\left\{
    \begin{aligned}
       \delta_{N_{\mathcal{A}}}^i,\ &\mbox{if} \ \delta_{N_{\mathcal{A}}}^i \in \mathcal{A}_I,\\
       \mathbf{0}_{N_{\mathcal{A}}},\ &\mbox{otherwise}.
    \end{aligned}
    \right.
\end{equation}
By virtue of $\Psi_1^{[I]}\in {\mathcal{B}}^{N_{\mathcal{A}}\times N_{\mathcal{A}}}$, construct $\Psi_{l}^{[I]}=\mathrm{sgn}({(\Psi_{1}^{[I]})}^l)\in \mathcal{B}^{N_{\mathcal{A}}\times N_{\mathcal{A}}}, l\in\mathbb{N}$ be the $l$-th step reachable matrix,
where ${[\Psi_{l}^{[I]}]}_{i,j}$ equals that state $\delta_{N_{\mathcal{A}}}^i$ is reachable at the $l$-th step from state $\delta_{N_{\mathcal{A}}}^j$, $i,j\in[1:N_{\mathcal{A}}]$.
Since $\mathcal{B}^{N_{\mathcal{A}}\times N_{\mathcal{A}}}$ is a finite set of matrices,
then there exists $l^*\in\mathbb{N}$, such that
$l^*={\mathrm{arcmin}}_{l\in\mathbb{N}} \{\Psi_{l+1}^{[I]}\in \cup_{k=1}^{l} \Psi_{k}^{[I]}\}$.
 Let
\begin{equation}
\Psi^{[I]}=\mathrm{sgn}(\sum_{l=1}^{l^*} \Psi_{l}^{[I]})\in \mathcal{B}^{N_{\mathcal{A}}\times N_{\mathcal{A}}},
\end{equation}
be the reachability matrix of system $\Sigma_{I}$.
Then the necessary and sufficient condition for $\mathcal{A}_{I}$-stabilization of system $\Sigma_{I}$ is obtained in our previous works.
\begin{lemma}{\rm\cite{Mutiantian2023delaysynch}}\label{lemmasetstab}
System $\Sigma_{I}$ is $\mathcal{A}_{I}$-stabilization,
if and only if
\begin{equation}\label{equtheosetstab}
{\mathbf{1}}_{N_{\mathcal{A}}}^{\mathrm{T}}\odot[{(\Gamma_{\mathcal{A}_{I}}\odot \Psi_1^{[I]})}^{|\mathcal{A}_{I}|}
\Gamma_{\mathcal{A}_I}\odot\Psi^{[I]}]
={\mathbf{1}}_{N_{\mathcal{A}}}^{\mathrm{T}}.
  \end{equation}
\end{lemma}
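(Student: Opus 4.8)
The plan is to decompose $\mathcal{A}_I$-stabilization into two separable requirements --- \emph{invariance} and \emph{reachability} --- and to read each of them off one factor of the matrix product appearing in \eqref{equtheosetstab}. First I would introduce the largest control-invariant subset $\mathcal{I}^{*}\subseteq\mathcal{A}_I$, namely the set of states of $\Sigma_I$ from which some control sequence keeps the trajectory inside $\mathcal{A}_I$ for all future time. The standard equivalence I would invoke, directly from Definition \ref{defsetstab}, is that $\Sigma_I$ is $\mathcal{A}_I$-stabilizable if and only if $\mathcal{I}^{*}$ is reachable (in finitely many steps) from every state $\delta_{N_{\mathcal{A}}}^{j}$. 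Necessity is immediate, since any stabilizing control drives the state into $\mathcal{A}_I$ and keeps it there, so the tail of the trajectory lies in an invariant subset of $\mathcal{A}_I$; sufficiency follows by concatenating a control steering $\delta_{N_{\mathcal{A}}}^{j}$ into $\mathcal{I}^{*}$ with the invariance-preserving control available on $\mathcal{I}^{*}$.

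The combinatorial heart of the argument is a finite-horizon characterization of $\mathcal{I}^{*}$. Here I would observe that $R:=\Gamma_{\mathcal{A}_I}\odot\Psi_1^{[I]}$ is exactly the one-step transition matrix restricted to moves that \emph{land} in $\mathcal{A}_I$: one has ${[R]}_{i,j}=1$ iff $\delta_{N_{\mathcal{A}}}^{i}\in\mathcal{A}_I$ and $\delta_{N_{\mathcal{A}}}^{i}$ is one-step reachable from $\delta_{N_{\mathcal{A}}}^{j}$, because left-multiplication by $\Gamma_{\mathcal{A}_I}$ zeroes every row outside $\mathcal{A}_I$. Consequently, for $\delta_{N_{\mathcal{A}}}^{l}\in\mathcal{A}_I$, the column $l$ of $R^{k}$ is nonzero iff there is a controlled path of length $k$ starting at $\delta_{N_{\mathcal{A}}}^{l}$ all of whose states lie in $\mathcal{A}_I$. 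Taking $k=|\mathcal{A}_I|$ and applying the pigeonhole principle to the at most $|\mathcal{A}_I|$ states of $\mathcal{A}_I$, such a path must revisit some state, producing a cycle inside $\mathcal{A}_I$; repeating the associated cyclic control keeps the trajectory in $\mathcal{A}_I$ forever. Thus $\delta_{N_{\mathcal{A}}}^{l}\in\mathcal{I}^{*}$ exactly when column $l$ of ${(\Gamma_{\mathcal{A}_I}\odot\Psi_1^{[I]})}^{|\mathcal{A}_I|}$ is nonzero, the converse direction being trivial since a state staying forever in $\mathcal{A}_I$ certainly stays for $|\mathcal{A}_I|$ steps.

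It then remains to glue reachability onto invariance. Since $\Gamma_{\mathcal{A}_I}\odot\Psi^{[I]}$ records, in its column $j$, precisely the states of $\mathcal{A}_I$ reachable from $\delta_{N_{\mathcal{A}}}^{j}$ in any finite number of steps, the Boolean product ${(\Gamma_{\mathcal{A}_I}\odot\Psi_1^{[I]})}^{|\mathcal{A}_I|}\,\Gamma_{\mathcal{A}_I}\odot\Psi^{[I]}$ has a nonzero $j$-th column iff from $\delta_{N_{\mathcal{A}}}^{j}$ one can first reach some state $\delta_{N_{\mathcal{A}}}^{l}\in\mathcal{A}_I$ which, by the previous paragraph, lies in $\mathcal{I}^{*}$. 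Left-multiplying by $\mathbf{1}_{N_{\mathcal{A}}}^{\mathrm{T}}$ under $\odot$ tests exactly this column-wise nonvanishing, so \eqref{equtheosetstab} asserts that $\mathcal{I}^{*}$ is reachable from every state; combined with the equivalence of the first paragraph, this is $\mathcal{A}_I$-stabilization, and reversing the chain of equivalences gives both directions. The step I expect to demand the most care is the finite-horizon characterization of $\mathcal{I}^{*}$: one must check that ``remaining in $\mathcal{A}_I$ for $|\mathcal{A}_I|$ steps'' genuinely upgrades to ``remaining forever,'' and, because $\Psi_1^{[I]}$ encodes reachability as an existential over inputs, that the cyclic control extracted from the pigeonhole repetition is a genuinely admissible control law for $\Sigma_I$ rather than a mere concatenation of unrelated one-step witnesses.
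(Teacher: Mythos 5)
The paper does not prove this lemma---it is imported by citation from the authors' earlier work---so there is no internal proof to compare against. Your argument is correct and is the standard one such results rest on: the decomposition into the largest control-invariant subset of $\mathcal{A}_I$ (certified by the pigeonhole bound $|\mathcal{A}_I|$ on the power of $\Gamma_{\mathcal{A}_I}\odot\Psi_1^{[I]}$) plus reachability of that subset from every state (certified by $\Gamma_{\mathcal{A}_I}\odot\Psi^{[I]}$, whose saturation at $l^*$ indeed captures reachability in any finite number of steps), with the Boolean row-sum test reading off column-wise nonvanishing.
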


For set ${\mathcal{A}}_I \subseteq \Delta_{N_{\mathcal{A}}}$, construct a series of set as follows
\begin{equation}\label{omegak}
\begin{aligned}
&\Omega_0^{\mathcal{A}_I}=\{\alpha \in\mathcal{A}_I | {(\Gamma_{\mathcal{A}_I}\odot \Psi_1^{[I]})}^{|{\mathcal{A}}_{I}|}\alpha \neq
\mathbf{0}_{N_{\mathcal{A}}}\}, \\
&\Omega^{{\mathcal{A}}_I}_{k+1}=\{\delta_{N_{\mathcal{A}}}^i |\mathrm{Col}_i(\Psi_1^{[I]})\wedge \mathcal{I}(\Omega^{{\mathcal{A}}_I}_{k})\neq \mathbf{0}_{N_{\mathcal{A}}}\} \setminus \Omega^{{\mathcal{A}}_I}_{k}, k\in[0:{N_{\mathcal{A}}}-|\Omega_0^{{\mathcal{A}}_I}|].
\end{aligned}
\end{equation}
Once formula \eqref{equtheosetstab} is satisfied,
there exists ${k}_I={\mathrm{arcmin}}_{k\in[0:{N_{\mathcal{A}}}-|\Omega_0^{{\mathcal{A}}_I}|]}\{ \Omega^{{\mathcal{A}}_I}_{k+1}=\emptyset\}$,
such that $\Delta_{N_{\mathcal{A}}}=\cup_{k=0}^{{k}_I} \Omega^{{\mathcal{A}}_I}_{k}$,
where $\Omega^{{\mathcal{A}}_I}_{k_1} \cap
 \Omega^{{\mathcal{A}}_I}_{k_2}=\emptyset, \forall k_1, k_2\in[0:{k}_I]$.
Then state feedback matrix $G\in \mathcal{L}_{M\times N}$ in \eqref{ualg} of BCN \eqref{BCNalg} is designed by Algorithm \ref{algoG},
where the significance is revealed by Theorem \ref{theosetstawithG}.

\begin{algorithm}[!htpb]
\renewcommand{\algorithmicrequire}{\textbf{Input:}}
\renewcommand{\algorithmicensure}{\textbf{Output:}}
\caption{Calculate state feedback matrix $G\in \mathcal{L}_{M\times N}$ based on the weak bisimulation method.}
\label{algoG}
\begin{algorithmic}[1]
\Require $F,  \Psi_1^{[I]}, \Upsilon_{\mathcal{R}_I}$ and $\Omega_k^{\mathcal{A}_I}, k\in[0:{k}_I]$.
\Ensure $G$.
\For{$i\in[1:N]$}
\For{$k\in[0:{k}_I]$}
\If{${\mathrm{Col}}_i(\Upsilon_{\mathcal{R}_I})\in \Omega_{k}^{\mathcal{A}_I}$}
\State{let $b=k-1$ if $k\geq 1$, otherwise $b=0$;}
\State{let $\Theta_i(\mathcal{A})=\{\delta_M^r | {\mathrm{Col}}_{(r-1)N+i}(\Upsilon_{\mathcal{R}_I} F)\in \Omega_{b}^{\mathcal{A}_I})\}$;}
\State{let ${\mathrm{Col}_i}(G)\in \Theta_i(\mathcal{A})$.}
\EndIf
\EndFor
\EndFor
\State{Return $G$.}
\end{algorithmic}
\end{algorithm}

\begin{theorem}\label{theosetstawithG}
BCN \eqref{BCNalg}  is $\mathcal{A}$-stabilizable under state feedback matrix $G$ with the shortest time,
 if and only if matrix $G$ is obtained by Algorithm \ref{algoG}.
\end{theorem}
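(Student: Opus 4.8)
The plan is to reduce the whole statement to the quotient system $\Sigma_I$ via Theorem \ref{theo-setstaBCNquo} and to read off the minimum stabilization time from the layer sets $\Omega_k^{\mathcal{A}_I}$ in \eqref{omegak}. I would first record two structural facts that glue the original and quotient dynamics together. First, since $\mathcal{E}_{\mathcal{R}_{\mathcal{A}}}\subseteq \mathcal{R}_{\mathcal{A}}$, every equivalence class lies entirely in $\mathcal{A}$ or entirely in $\mathcal{A}^c$; consequently a state $\delta_N^i$ belongs to $\mathcal{A}$ if and only if its class $\mathrm{Col}_i(\Upsilon_{\mathcal{R}_I})$ lies in $\mathcal{A}_I$. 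Second, by the ``uniform reachability'' of a weak bisimulation (the characterization behind Theorem \ref{theobi-sim} and the definition of $\Psi_1^{[I]}$ in \eqref{quosysPsi1}), a class $p$ is one-step reachable from a class $q$ in $\Sigma_I$ if and only if, from every original state $\delta_N^i$ with class $q$, there is a control $\delta_M^r$ driving $\delta_N^i$ into the class $p$, i.e. $\mathrm{Col}_{(r-1)N+i}(\Upsilon_{\mathcal{R}_I}F)$ equals $p$. These two facts let me transport the layer structure $\{\Omega_k^{\mathcal{A}_I}\}$ from $\Sigma_I$ down to a state-wise control design on the original state set.

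The key combinatorial lemma I would isolate is that $\Omega_k^{\mathcal{A}_I}$ is exactly the set of classes whose minimum stabilization time into $\mathcal{A}_I$ equals $k$: the set $\Omega_0^{\mathcal{A}_I}$ is the largest control-invariant subset of $\mathcal{A}_I$ (this is what the test ${(\Gamma_{\mathcal{A}_I}\odot\Psi_1^{[I]})}^{|\mathcal{A}_I|}\alpha\neq\mathbf{0}_{N_{\mathcal{A}}}$ certifies, in the spirit of Lemma \ref{lemmasetstab}), and from the recursion in \eqref{omegak} a class in $\Omega_k^{\mathcal{A}_I}$ with $k\geq 1$ can reach $\Omega_{k-1}^{\mathcal{A}_I}$ in one step but cannot reach any lower layer $\Omega_j^{\mathcal{A}_I}$, $j\leq k-2$, in a single step, since otherwise it would already have been placed in an earlier layer. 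Hence a one-step move that decreases the layer index by exactly one is optimal, and no trajectory can enter the invariant core $\Omega_0^{\mathcal{A}_I}$ faster. Combined with the transport facts above, this shows that $\Theta_i(\mathcal{A})$ in Algorithm \ref{algoG} is precisely the nonempty set of controls realizing such an optimal one-step move from $\delta_N^i$.

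For sufficiency, I would take any $x_0=\delta_N^i$ whose class lies in $\Omega_k^{\mathcal{A}_I}$ and argue by induction on $k$: since $\mathrm{Col}_i(G)\in\Theta_i(\mathcal{A})$, after one step the class drops into $\Omega_{k-1}^{\mathcal{A}_I}$ (and for $k=0$ stays in $\Omega_0^{\mathcal{A}_I}$), so after exactly $k$ steps the trajectory enters $\Omega_0^{\mathcal{A}_I}\subseteq\mathcal{A}_I$ and, by control-invariance of $\Omega_0^{\mathcal{A}_I}$, remains there forever. Pulling this back through the class/$\mathcal{A}$ correspondence gives $x(t)\in\mathcal{A}$ for all $t\geq k$, while the one-step lower bound on entry into the core shows $k$ is the shortest possible entry time; hence $G$ is shortest-time $\mathcal{A}$-stabilizing. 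For necessity I would argue by contraposition: if some $\mathrm{Col}_i(G)\notin\Theta_i(\mathcal{A})$ with the class of $\delta_N^i$ in $\Omega_k^{\mathcal{A}_I}$, then from $\delta_N^i$ the feedback first moves the class to a layer of index $\geq k$ (or, when $k=0$, out of the invariant core), so the $G$-trajectory needs strictly more than $k$ steps to settle permanently in $\mathcal{A}$, contradicting shortest-time stabilization; therefore every column of $G$ must lie in the corresponding $\Theta_i(\mathcal{A})$, i.e. $G$ is an output of Algorithm \ref{algoG}.

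The main obstacle I anticipate is the faithful lifting through the bisimulation: I must verify rigorously that optimal control at the class level is always realizable by an actual control of the original BCN at each individual state, giving nonemptiness of $\Theta_i(\mathcal{A})$, and that projecting an arbitrary original $G$-trajectory onto $\Sigma_I$ is consistent with the layer recursion \eqref{omegak}. Care is also needed at the boundary case $k=0$, where the requirement is invariance of $\Omega_0^{\mathcal{A}_I}$ rather than a strict descent of the layer index, and in pinning down ``shortest time'' as the per-initial-state minimum so that the two-sided optimality argument is airtight.
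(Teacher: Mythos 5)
Your proposal is correct and follows essentially the same route as the paper: both reduce to the quotient system $\Sigma_I$, interpret the layers $\Omega_k^{\mathcal{A}_I}$ from \eqref{omegak} as the classes with minimum entry time $k$ into the invariant core $\Omega_0^{\mathcal{A}_I}$, show that Algorithm \ref{algoG} enforces a one-step layer descent (sufficiency by iteration, optimality via disjointness of the $\Omega_k^{\mathcal{A}_I}$), and prove necessity by showing any shortest-time $G$ must have $\mathrm{Col}_i(G)\in\Theta_i(\mathcal{A})$. The only difference is presentational — the paper phrases the optimality step as a contradiction with $\Omega^{\mathcal{A}_I}_{k_1}\cap\Omega^{\mathcal{A}_I}_{k_2}=\emptyset$ rather than as your explicit layer-characterization lemma — and the lifting issues you flag are exactly the points the paper also handles (somewhat informally) via Definition \ref{defbi-simulation}.
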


\begin{proof}
For each initial state $x(0)=\delta_N^i$,
let $\omega_i:=\min\{\omega| x(t,x(0), {\{u(l)\}}_{l=0}^{t-1})\in \mathcal{A}, \forall t\geq \omega\}$ be the time of initial state $x(0)$ achieving $\mathcal{A}$-stabilization under state feedback matrix $G$.

(Sufficiency):
Suppose that matrix $G\in \mathcal{L}_{M\times N}$ is obtained by Algorithm \ref{algoG}.
For any $x(0)=\delta_N^i$,
there exists $k_i\in[0:{k}_I]$,
such that $\widetilde{x}(0)=\Upsilon_{\mathcal{R}_I}x(0)\in \Omega_{k_i}^{{\mathcal{A}}_I}$.
Let $u(0)=Gx(0):=\delta_M^{r_i}$.
From steps 4-6,
one has $\Upsilon_{\mathcal{R}_I}x(1,x(0),u(0))
={\mathrm{Col}}_{(r_i-1)N+i}(\Upsilon_{\mathcal{R}_I}F)\in \Omega_{b_i}^{{\mathcal{A}}_I}$,
where $b_i=k_i-1$ if $k_i>0$, otherwise  $b_i=0$.
Repeating the above operation by $u(t)=Gx(t), t\in\mathbb{N}$,
one has that $\Upsilon_{\mathcal{R}_I}x(k,x(0),{\{u(t)\}}_{t=0}^{k-1})\in \Omega_{0}^{{\mathcal{A}}_I}\subseteq {A}_I, \forall k\geq k_i$.
Due to $\mathcal{A}_I=\{\beta\in\Delta_{N_{\mathcal{A}}} | \exists \alpha\in \mathcal{A}, \ \mbox{s.t.,}\ \beta =\Upsilon_{\mathcal{R}_I} \alpha\}$ defined in Theorem \ref{theo-setstaBCNquo},
it follows that $x(k,x(0),{\{u(t)\}}_{t=0}^{k-1})\in \mathcal{A}, \forall k\geq k_i$.
From Definition \ref{defsetstab},
it can be concluded that BCN \eqref{BCNalg} with state feedback matrix $G$
is $\mathcal{A}$-stabilizable.

Next, it is in a position by the method of contradiction to prove matrix $G$ guaranteeing that the time of $\mathcal{A}$-stabilization in BCN \eqref{BCNalg} is the shortest.
Suppose that there exists feedback matrix $\widetilde{G}\in \mathcal{L}_{M\times N}$,
such that under it,
we can find an initial state $x(0)=\delta_N^\theta$ with the shorter time achieving $\mathcal{A}$-stabilization than the situation under matrix $G$.
First,
construct $\widetilde{\omega}_\theta:=\min\{\omega| x(t,x(0), {\{u(l)\}}_{l=0}^{t-1})\in \mathcal{A}, \forall t\geq \omega\}$ be the time of initial state $x(0)$ achieving $\mathcal{A}$-stabilization under state feedback matrix $\widetilde{G}$.
By the assumption above,
one has $\omega_\theta> \widetilde{\omega}_\theta$.
Since matrix $G$ is calculated by Algorithm 1,
it follows from step 3 that $\widetilde{x}(0):= \Upsilon_{\mathcal{R}_I} x(0)\in \Omega^{\mathcal{A}_I}_{\omega_\theta}$.
Then according to Definition \ref{defbi-simulation},
the reachability at the $\widetilde{\omega}_\theta$-step from $x(0)$ to stabilized set $\mathcal{A}$ in BCN \eqref{BCNalg}
under state feedback matrix $\widetilde{G}$
also demonstrates the reachability at the $\widetilde{\omega}_\theta$-step from $\widetilde{x}(0)$ to stabilized set $\mathcal{A}_I$ in quotient system $\sum_I$,
which implies that $\widetilde{x}(0)\in \Omega^{\mathcal{A}_I}_{\widetilde{\omega}_\theta}$.
On this basis,
one has $\widetilde{x}(0)\in \Omega^{\mathcal{A}_I}_{\omega_\theta} \cap\Omega^{\mathcal{A}_I}_{\widetilde{\omega}_\theta}\neq \emptyset$,
which contradicts condition $\Omega^{{\mathcal{A}}_I}_{k_1} \cap
 \Omega^{{\mathcal{A}}_I}_{k_2}=\emptyset, \forall k_1, k_2\in[0:{k}_I]$.
Therefore, the invalid assumption accesses the rationality that matrix $G$ ensures the shortest time to achieving $\mathcal{A}$-stabilization of BCN \eqref{BCNalg}.

(Necessity): If BCN \eqref{BCNalg} is $\mathcal{A}$-stabilizable under state feedback matrix $G$ with the shortest time,
then we prove that matrix $G$ is obtained by Algorithm \ref{algoG} only by verifying ${\mathrm{Col}}_i(G)\in \Theta_i(\mathcal{A}), \forall i\in[1:N]$.
For each initial state $x(0)=\delta_N^i, i\in[1:N]$,
$\omega_i$ as the shortest time of initial state $x(0)$ achieving $\mathcal{A}$-stabilization under state feedback matrix $G$ also reveals the reachability at the $\omega_i$-th step from $\widetilde{x}(0):= \Upsilon_{\mathcal{R}_I} x(0)$ to target set $\mathcal{A}_I$,
which follows from formula \eqref{omegak} that $\widetilde{x}(0)\in \Omega_{\omega_i}^{\mathcal{A}_I}$.
Meanwhile, stabilized set $\mathcal{A}$ is also reachable from $x(1, x(0), u(0))=Fu(0)x(0)$ at the $b_i$-step,
where $b_i=\omega_i-1$ if $\omega_i\geq 1$, otherwise $b_i=0$.
Let $u(0)=Gx(0)={\mathrm{Col}}_i(G):=\delta_M^{r_i}$.
According to the property of bisimulation relation in Definition \ref{defbi-simulation},
it determines the reachability at the $b_i$-th step from $\widetilde{x}(1):=\Upsilon_{\mathcal{R}_I} Fu(0)x(0)$ to target set $\mathcal{A}_I$,
\emph{i.e.,} $\widetilde{x}(1)=\Upsilon_{\mathcal{R}_I} Fu(0)x(0)={\mathrm{Col}}_{(r_i-1)N+i}(\Upsilon_{\mathcal{R}_I} F)\in \Omega_{b_i}^{\mathcal{A}_I}$.
Then it can be implied from step 6 in Algorithm \ref{algoG}  that ${\mathrm{Col}}_i(G)\in \Theta_i(\mathcal{A})$.
Due to the arbitrariness of $x(0)=\delta_N^i, i\in[1:N]$,
the result can be inferred that matrix $G$ is one of the outputs of Algorithm \ref{algoG}.
The proof is completed.
\end{proof}

\begin{remark}
By Algorithm \ref{algoG},
 all kinds of time-optimal state feedback controllers can be designed to achieve the set stabilization of BCN \eqref{BCNalg}.
In contrast to the results of set stabilization in \cite{Guoyuqian2015setsta} with  computational complexity $\mathcal{O}(N^4)$,
 the computational complexity of the approach in this paper is $ \mathcal{O}(N^3+{(N_{\mathcal{A}})}^4)$.
\end{remark}

\subsection{The comparison of set stabilization of BCNs via two kinds of bisimulations}

In this section,
we first propose the novel criterion of strong bisimulation relation by the constructed strong bisimulation matrix of BCN \eqref{BCNalg}.
Then the comparison of set stabilization of BCNs under two kinds of bisimulation methods is revealed.

Consider BCN \eqref{BCNalg}.
Let $\overline{F}=F W_{[N,M]}=[\overline{F}_1, \overline{F}_2, \ldots, \overline{F}_N]\in \mathcal{L}_{N\times NM}$,
where $\overline{F}_i=\overline{F} \delta_N^i\in \mathcal{L}_{N\times M}, i\in[1:N]$.
On this basis, we construct a strong bisimulation matrix of BCN \eqref{BCNalg}, ${\overline{M}}_{\mathcal{R}_{\mathcal{A}},F}\in\mathcal{B}^{N\times N}$ expressed as follows:
\begin{equation}\label{IIMRF}
    {[{\overline{M}}_{\mathcal{R}_{\mathcal{A}},F}]}_{i,j}=\left\{
    \begin{aligned}
        1,\ &\mbox{if} \ \Upsilon_{\mathcal{R}_{\mathcal{A}}}\odot\overline{F}_i=\Upsilon_{\mathcal{R}_{\mathcal{A}}}\odot\overline{F}_j,\\
        0,\ &\mbox{otherwise}.
    \end{aligned}
    \right.
\end{equation}

Then the criterion for the strong bisimulation relation can be formulated.

\begin{theorem}\label{theobi-simII}
An equivalence relation $\mathcal{R}_{\mathcal{A}}\subseteq\Delta_{N}\times \Delta_{N}$ is a strong bisimulation relation between BCN \eqref{BCNalg} and its replication system, if and only if
\begin{equation}\label{equtheobi-simII}
\Upsilon_{\mathcal{R}_{\mathcal{A}}}\leq {\overline{M}}_{\mathcal{R}_{\mathcal{A}},F}.
\end{equation}

\end{theorem}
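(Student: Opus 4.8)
The plan is to mirror the proof of Theorem \ref{theobi-sim}, reducing the entry-wise inequality \eqref{equtheobi-simII} to the defining successor condition of Definition \ref{defbi-simulation2}. The crucial preliminary step is to interpret the columns of $\Upsilon_{\mathcal{R}_{\mathcal{A}}}\odot\overline{F}_i$. Since $\overline{F}=FW_{[N,M]}$, Lemma \ref{le1}(3) gives $\overline{F}_i\delta_M^r=\overline{F}\delta_N^i\delta_M^r=FW_{[N,M]}\delta_N^i\delta_M^r=F\delta_M^r\delta_N^i$, so the $r$-th column of the logical matrix $\overline{F}_i$ is exactly the successor state $Fu\delta_N^i$ of $\delta_N^i$ under input $u=\delta_M^r$. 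Because $\overline{F}_i$ is a logical matrix (one $1$ per column), the Boolean product selects a single column of $\Upsilon_{\mathcal{R}_{\mathcal{A}}}$: writing $Fu\delta_N^i=\delta_N^{s}$, one obtains $\mathrm{Col}_r(\Upsilon_{\mathcal{R}_{\mathcal{A}}}\odot\overline{F}_i)=\mathrm{Col}_{s}(\Upsilon_{\mathcal{R}_{\mathcal{A}}})$, which, since $\Upsilon_{\mathcal{R}_{\mathcal{A}}}$ is the symmetric matrix form of the equivalence relation, is precisely the indicator of the equivalence class of the successor $Fu\delta_N^i$.

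With this reading in hand, I would observe that the equality $\Upsilon_{\mathcal{R}_{\mathcal{A}}}\odot\overline{F}_i=\Upsilon_{\mathcal{R}_{\mathcal{A}}}\odot\overline{F}_j$ is equivalent, column by column over $r\in[1:M]$, to the statement that $Fu\delta_N^i$ and $Fu\delta_N^j$ lie in the same equivalence class for every $u\in\Delta_M$, i.e. $(Fu\delta_N^i,Fu\delta_N^j)\in\mathcal{R}_{\mathcal{A}}$ for all $u\in\Delta_M$. Consequently, by the definition \eqref{IIMRF}, $[{\overline{M}}_{\mathcal{R}_{\mathcal{A}},F}]_{i,j}=1$ holds if and only if the successor condition \eqref{FuxFuy} of Definition \ref{defbi-simulation2} is satisfied by the pair $(\delta_N^i,\delta_N^j)$.

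It then remains only to match the two quantifier patterns. For necessity, I would assume $\mathcal{R}_{\mathcal{A}}$ is a strong bisimulation; for any $(i,j)$ with $[\Upsilon_{\mathcal{R}_{\mathcal{A}}}]_{i,j}=1$ we have $(\delta_N^i,\delta_N^j)\in\mathcal{R}_{\mathcal{A}}$, so Definition \ref{defbi-simulation2} yields $(Fu\delta_N^i,Fu\delta_N^j)\in\mathcal{R}_{\mathcal{A}}$ for all $u$, whence $[{\overline{M}}_{\mathcal{R}_{\mathcal{A}},F}]_{i,j}=1$ and \eqref{equtheobi-simII} follows. For sufficiency, I would assume \eqref{equtheobi-simII}; every pair in $\mathcal{R}_{\mathcal{A}}$ has the form $(\delta_N^i,\delta_N^j)$ with $[\Upsilon_{\mathcal{R}_{\mathcal{A}}}]_{i,j}=1$, so $[{\overline{M}}_{\mathcal{R}_{\mathcal{A}},F}]_{i,j}=1$, which by the reading above gives $(Fu\delta_N^i,Fu\delta_N^j)\in\mathcal{R}_{\mathcal{A}}$ for all $u\in\Delta_M$, i.e. exactly condition \eqref{FuxFuy}.

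The main obstacle will be the preliminary column identification, specifically the correct use of the swap matrix to show $\overline{F}_i\delta_M^r=Fu\delta_N^i$ and the observation that Boolean-multiplying a symmetric relation matrix by a logical matrix selects equivalence-class indicators of successors; once this bookkeeping is settled the theorem reduces to an immediate rephrasing of Definition \ref{defbi-simulation2}, with no genuine analytic difficulty. Unlike the weak case (Theorem \ref{theobi-sim}), no symmetry argument or appeal to transitivity of $\mathcal{R}_{\mathcal{A}}$ is needed here, since the strong condition ties $i$ and $j$ through the \emph{same} input $u$, so the proof should be noticeably shorter than its weak counterpart.
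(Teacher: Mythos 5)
Your proposal is correct and follows essentially the same route as the paper: both reduce the entrywise inequality \eqref{equtheobi-simII} to the successor condition \eqref{FuxFuy} by reading the $r$-th column of $\Upsilon_{\mathcal{R}_{\mathcal{A}}}\odot\overline{F}_i$ as the equivalence-class indicator of $Fu\delta_N^i$ (the paper delegates this step to a terse ``from Theorem \ref{theobi-sim}'' while you derive it explicitly via the swap-matrix identity). Your only minor overstatement is that passing between ``equal indicator columns'' and ``same equivalence class'' does quietly use reflexivity and transitivity of $\mathcal{R}_{\mathcal{A}}$, but this does not affect the validity of the argument.
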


\begin{proof}
According to Definition \ref{defbi-simulation2},
it follows that  $(Fux_0, Fu\overline{x}_0)\in \mathcal{R}_{\mathcal{A}}, \forall (x_0, \overline{x}_0)=(\delta_N^i, \delta_N^j)$ $\in \mathcal{R}_{\mathcal{A}}, \forall u\in\Delta_M$.
From Theorem \ref{theobi-sim}, it equals to ${\mathrm{Col}}_k(\Upsilon_{\mathcal{R}_{\mathcal{A}}}\odot\overline{F}_i)
={\mathrm{Col}}_k(\Upsilon_{\mathcal{R}_{\mathcal{A}}}\odot\overline{F}_j), \forall k\in[1:M]$.
Due to the arbitrariness of $(\delta_{N}^i, \delta_{N}^j)\in \mathcal{R}_{\mathcal{A}}$ and $u\in\Delta_M$,
it is confirmed that \eqref{equtheobi-simII} holds.
The proof is completed.
\end{proof}

On the basis of Theorem \ref{theobi-simII}, a new method is proposed to calculate the maximum strong bisimulation relation, expressed by $\overline{\mathcal{E}}_{\mathcal{R}_{\mathcal{A}}}$, included in $\mathcal{R}_{\mathcal{A}}$ by replacing matrix $M_{\mathcal{R}_{k}, F}$ with matrix ${\overline{M}}_{{\mathcal{R}}_{k}, F}$ in \eqref{Sk}.
Then the quotient system of BCN \eqref{BCNalg} based on the strong bisimulation method, expressed by $\Sigma_{II}$ is constructed in \cite{Lirui2021quotients}, as well as the equivalent formulation of set stabilization.

\begin{proposition}{\rm\cite{Lirui2021quotients}}\label{propo-quoII-setstab}
Consider BCN \eqref{BCNalg} and given set $\mathcal{A}$.
Suppose that $\overline{\mathcal{E}}_{\mathcal{R}_{\mathcal{A}}}$ is the maximum strong bisimulation relation contained in $\mathcal{R}_{\mathcal{A}}$, and matrix $\Upsilon_{\mathcal{R}_{II}}\in \mathcal{L}_{\overline{N}_\mathcal{A}\times N}$ is obtained from
$\Upsilon_{{\overline{\mathcal{E}}}_{\mathcal{R}_{\mathcal{A}}}}$ by collapsing the identical rows.
Then the following conditions hold:

1) BCN \eqref{BCNalg} is $\mathcal{A}$-stabilization,
if and only if
system $\Sigma_{II}$ with state transition matrix $F_{II}=[\overline{F}_1\ \cdots\ \overline{F}_M]$ is $\mathcal{A}_{II}$-stabilization,
where
\begin{equation}
\left\{
\begin{aligned}
\overline{F}_i&=\Upsilon_{\mathcal{R}_{II}} \odot F_i \odot \Upsilon_{\mathcal{R}_{II}}^\top, i\in[1:M],\\
\mathcal{A}_{II}&=\{\beta\in\Delta_{\overline{N}_{\mathcal{A}}} | \exists \alpha\in \mathcal{A}, \ \mbox{s.t.,}\ \beta =\Upsilon_{\mathcal{R}_{II}} \alpha\}.
\end{aligned}
\right.
\end{equation}

2) If system $\Sigma_{II}$
can be stabilized to set $\mathcal{A}_{II}$ via a feedback
law $(\overline{x}, t) \mapsto u(\overline{x}, t)$, then BCN \eqref{BCNalg} can be stabilized to $\mathcal{A}$ using the
feedback law $(x, t) \mapsto u(\Upsilon_{\mathcal{R}_{II}}x, t)$.
\end{proposition}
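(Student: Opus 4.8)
The plan is to reduce both statements to a single algebraic intertwining identity relating the class-assignment map $\Upsilon_{\mathcal{R}_{II}}$ to the one-step dynamics, after which the stabilization equivalence and the feedback transfer become routine trajectory inductions. Write $P:=\Upsilon_{\mathcal{R}_{II}}\in\mathcal{L}_{\overline{N}_{\mathcal{A}}\times N}$, so that for $x\in\Delta_N$ the vector $Px\in\Delta_{\overline{N}_{\mathcal{A}}}$ records the equivalence class of $x$ under $\overline{\mathcal{E}}_{\mathcal{R}_{\mathcal{A}}}$, and let $F_i:=F\delta_M^i\in\mathcal{L}_{N\times N}$ be the transition matrix of BCN \eqref{BCNalg} under the constant input $\delta_M^i$, so that $\overline{F}_i=P\odot F_i\odot P^{\top}$ is the $i$-th block of $F_{II}$. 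The first and decisive step I would carry out is to show that each $\overline{F}_i$ is a genuine logical matrix and that the commutation relation $P F_i=\overline{F}_i P$ holds, i.e. $P(F_i x)=\overline{F}_i(Px)$ for every $x\in\Delta_N$. Both facts are exactly the content of $\overline{\mathcal{E}}_{\mathcal{R}_{\mathcal{A}}}$ being a strong bisimulation: by Theorem \ref{theobi-simII}, any two states in the same class are sent by $F_i$ into a single common class, so the Boolean product $P\odot F_i\odot P^{\top}$ collapses to one nonzero entry per column and its action on $Px$ returns the class of $F_i x$.

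Granting the intertwining identity, I would prove part 1) by matching trajectories under the \emph{identical} control sequence, which is the feature that distinguishes the strong bisimulation from the weak one in Theorem \ref{theo-setstaBCNquo}. For sufficiency, given any $x_0\in\Delta_N$ set $\overline{x}_0=Px_0$; by $\mathcal{A}_{II}$-stabilization of $\Sigma_{II}$ there are a control sequence $\{v(t)\}$ and $T$ with the quotient trajectory lying in $\mathcal{A}_{II}$ for $t\geq T$. Feeding the same sequence $u(t)=v(t)$ into BCN \eqref{BCNalg} and inducting with $P F_i=\overline{F}_i P$ gives $P x(t)=\overline{x}(t)$ for all $t$. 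Because $\overline{\mathcal{E}}_{\mathcal{R}_{\mathcal{A}}}\subseteq\mathcal{R}_{\mathcal{A}}$ refines the partition $\{\mathcal{A},\mathcal{A}^{c}\}$, each class sits entirely inside $\mathcal{A}$ or inside $\mathcal{A}^{c}$, so by the definition of $\mathcal{A}_{II}$ one has $Px\in\mathcal{A}_{II}$ iff $x\in\mathcal{A}$; hence $x(t)\in\mathcal{A}$ for $t\geq T$. Necessity is symmetric: pick $x_0$ with $Px_0=\overline{x}_0$ using surjectivity of $P$, run the $\mathcal{A}$-stabilizing control of BCN \eqref{BCNalg}, and project through the same identity to land in $\mathcal{A}_{II}$.

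For part 2) I would close the feedback diagram directly. Given a feedback law $(\overline{x},t)\mapsto u(\overline{x},t)$ stabilizing $\Sigma_{II}$ to $\mathcal{A}_{II}$, apply $u(t)=u(Px(t),t)$ to BCN \eqref{BCNalg}; since the chosen input index depends on $x$ only through its class $Px$, the commutation relation yields $P x(t+1)=\overline{F}_{\iota(t)}\,Px(t)$ with $\iota(t)$ the index selected at the quotient state $Px(t)$. Thus $\{Px(t)\}$ is precisely the closed-loop quotient trajectory generated by the same feedback from $Px(0)$, which enters and remains in $\mathcal{A}_{II}$; the class-saturation of $\mathcal{A}$ then forces $x(t)\in\mathcal{A}$ for all large $t$.

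The main obstacle is the first step, namely establishing that $\overline{F}_i=P\odot F_i\odot P^{\top}$ is single-valued and that $PF_i=\overline{F}_iP$ holds exactly. This is where the strong bisimulation hypothesis is indispensable and where the two parts genuinely diverge from the weak-bisimulation development: because the matching input is literally the same (Definition \ref{defbi-simulation2}), the quotient dynamics is deterministic and the feedback $u(Px,t)$ is well defined, so part 2) has no weak-bisimulation counterpart. Once the intertwining is secured, every remaining step is a finite induction on the time index, and care is only needed to verify surjectivity of $P$ (guaranteed by row-collapsing of $\Upsilon_{\overline{\mathcal{E}}_{\mathcal{R}_{\mathcal{A}}}}$) and the class-saturation of $\mathcal{A}$ used to pass between $Px\in\mathcal{A}_{II}$ and $x\in\mathcal{A}$.
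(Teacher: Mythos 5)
Your proposal is correct, and it is worth noting that the paper itself offers no proof of this proposition: it is imported verbatim from the cited reference \cite{Lirui2021quotients}, so there is nothing in the text to compare against line by line. Your argument supplies exactly the right self-contained justification. The decisive step is the one you isolate: because $\overline{\mathcal{E}}_{\mathcal{R}_{\mathcal{A}}}$ is a strong bisimulation, every state of a given class is mapped by $F_i$ into one and the same class, so $\overline{F}_i=P\odot F_i\odot P^{\top}$ has exactly one nonzero entry per column (hence is a logical matrix) and the intertwining $PF_i=\overline{F}_iP$ holds; from there parts 1) and 2) are finite inductions on $t$, using surjectivity of $P$ and the fact that $\overline{\mathcal{E}}_{\mathcal{R}_{\mathcal{A}}}\subseteq\mathcal{R}_{\mathcal{A}}$ saturates $\mathcal{A}$, so that $Px\in\mathcal{A}_{II}$ if and only if $x\in\mathcal{A}$. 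One small inaccuracy: the class-to-class property you need does not really come from Theorem~\ref{theobi-simII} (which is only the matrix criterion for being a strong bisimulation); it follows directly from Definition~\ref{defbi-simulation2} applied to the equivalence relation $\overline{\mathcal{E}}_{\mathcal{R}_{\mathcal{A}}}$. Your closing observation is also apt and matches the paper's own discussion in Section~3.2: it is precisely the determinism of the quotient dynamics under identical inputs that makes the lifted feedback $u(\Upsilon_{\mathcal{R}_{II}}x,t)$ well defined in part 2), which is why the strong-bisimulation route yields controllers independent of $F$ while the weak-bisimulation route (Algorithm~\ref{algoG}) cannot.
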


As indicated in Proposition \ref{propo-quoII-setstab},
only a special type of feedback laws of BCN \eqref{BCNalg} is obtained,
where all states in the same equivalence class of relation $\Upsilon_{\mathcal{R}_{II}}$ correspond to the same feedback control.
Inspired by this,
the site of much attention is to design all kinds of state feedback controls of BCN \eqref{BCNalg} based on the strong bisimulation method,
where the detailed procedure is expressed by Algorithm \ref{algoGII}.

\begin{algorithm}[!htpb]
\renewcommand{\algorithmicrequire}{\textbf{Input:}}
\renewcommand{\algorithmicensure}{\textbf{Output:}}
\caption{Calculate state feedback matrix $G\in \mathcal{L}_{M\times N}$ based on the strong bisimulation method.}
\label{algoGII}
\begin{algorithmic}[1]
\Require $F_{[II]}, \Upsilon_{\mathcal{R}_{II}}$.
\Ensure $G$.
\State{calculate sets $\Omega_k^{\mathcal{A}_{II}}$ by replacing
matrices $\Gamma_{\mathcal{A}_I}$ and $\Psi_1^{[I]}$ with matrices $\Gamma_{\mathcal{A}_{II}}$ and $F_{[II]}\mathbf{1}_M$ in \eqref{omegak}, respectively.}
\State{let $k_{II}:=\min_{k}\{k\in[0:{\overline{N}_{\mathcal{A}}}-|\Omega_0^{{\mathcal{A}}_{II}}|]|\Omega_{k+1}^{\mathcal{A}_{II}}=\emptyset\}$.}
\For{$i\in[1:N]$}
\For{$k\in[0:{k}_{II}]$}
\If{${\mathrm{Col}}_i(\Upsilon_{\mathcal{R}_{II}})\in \Omega_{k}^{\mathcal{A}_{II}}$}
\State{let $b=k-1$ if $k\geq 1$, otherwise $b=0$;}
\State{let $\overline{\Theta}_i(\mathcal{A})=\{\delta_M^r | {\mathrm{Col}}_{i}(\overline{F}_r\Upsilon_{\mathcal{R}_{II}})\in \Omega_{b}^{\mathcal{A}_{II}}\}$;}
\State{let ${\mathrm{Col}_i}(G)\in \overline{\Theta}_i(\mathcal{A})$.}
\EndIf
\EndFor
\EndFor
\State{Return $G$.}
\end{algorithmic}
\end{algorithm}

\begin{theorem}\label{theosetstawithGII}
 BCN \eqref{BCNalg}  is $\mathcal{A}$-stabilizable under state feedback matrix $G$ with the shortest time,
 if and only if matrix $G$ is obtained by Algorithm \ref{algoGII}.
\end{theorem}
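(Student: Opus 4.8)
The plan is to mirror the argument of Theorem \ref{theosetstawithG}, substituting the strong-bisimulation quotient dynamics for the weak-bisimulation ones. The decisive structural fact I would establish first is that, because $\overline{\mathcal{E}}_{\mathcal{R}_{\mathcal{A}}}$ is a strong bisimulation, the \emph{same} control drives the original and quotient trajectories consistently: for every $r\in[1:M]$ and every $i\in[1:N]$,
\begin{equation}\label{eq:commute}
\Upsilon_{\mathcal{R}_{II}} F\delta_M^r\delta_N^i=\overline{F}_r\,\Upsilon_{\mathcal{R}_{II}}\delta_N^i=\mathrm{Col}_i(\overline{F}_r\Upsilon_{\mathcal{R}_{II}}).
\end{equation}
This is immediate from Definition \ref{defbi-simulation2} together with the construction $\overline{F}_r=\Upsilon_{\mathcal{R}_{II}}\odot F_r\odot\Upsilon_{\mathcal{R}_{II}}^{\top}$ of Proposition \ref{propo-quoII-setstab}, writing $F_r:=F\delta_M^r$: applying $u=\delta_M^r$ sends each equivalence class into a single class, so the induced quotient map coincides with $\overline{F}_r$. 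Relation \eqref{eq:commute} identifies the column examined in step 7 of Algorithm \ref{algoGII} with the one-step quotient state $\Upsilon_{\mathcal{R}_{II}}x(1,\delta_N^i,\delta_M^r)$, which is exactly what is needed to translate between the two systems using a single shared control sequence.

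For sufficiency, suppose $G$ is produced by Algorithm \ref{algoGII}. For each initial state $x(0)=\delta_N^i$ the partition $\Delta_{\overline{N}_{\mathcal{A}}}=\cup_{k=0}^{k_{II}}\Omega_k^{\mathcal{A}_{II}}$ locates a unique layer $k_i$ with $\Upsilon_{\mathcal{R}_{II}}x(0)\in\Omega_{k_i}^{\mathcal{A}_{II}}$; the algorithm then selects $\mathrm{Col}_i(G)=\delta_M^{r_i}\in\overline{\Theta}_i(\mathcal{A})$, and by \eqref{eq:commute} this choice forces $\Upsilon_{\mathcal{R}_{II}}x(1)\in\Omega_{b_i}^{\mathcal{A}_{II}}$ with $b_i=\max\{k_i-1,0\}$. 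Iterating the feedback $u(t)=Gx(t)$ drives the quotient state down through the layers until it enters $\Omega_0^{\mathcal{A}_{II}}\subseteq\mathcal{A}_{II}$ within $k_i$ steps, whence $x(t)\in\mathcal{A}$ for all $t\ge k_i$ by the definition of $\mathcal{A}_{II}$. Optimality of the time is then handled by the same contradiction scheme as in Theorem \ref{theosetstawithG}: if some $\widetilde{G}$ stabilized an initial state $x(0)=\delta_N^\theta$ strictly faster, then the reachability it realizes in the original system would, via \eqref{eq:commute}, place $\Upsilon_{\mathcal{R}_{II}}x(0)$ simultaneously in $\Omega_{\omega_\theta}^{\mathcal{A}_{II}}$ and in a lower layer $\Omega_{\widetilde{\omega}_\theta}^{\mathcal{A}_{II}}$, contradicting the pairwise disjointness of the $\Omega_k^{\mathcal{A}_{II}}$.

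For necessity, I would run the implication backwards: if $G$ achieves $\mathcal{A}$-stabilization in shortest time $\omega_i$ from each $x(0)=\delta_N^i$, then \eqref{eq:commute} transports this reachability to the quotient with identical step counts, giving $\Upsilon_{\mathcal{R}_{II}}x(0)\in\Omega_{\omega_i}^{\mathcal{A}_{II}}$ and, after one step under $u(0)=Gx(0)=\delta_M^{r_i}$, $\mathrm{Col}_i(\overline{F}_{r_i}\Upsilon_{\mathcal{R}_{II}})\in\Omega_{b_i}^{\mathcal{A}_{II}}$; by step 7 this means $\mathrm{Col}_i(G)\in\overline{\Theta}_i(\mathcal{A})$, so $G$ is one of the matrices returned by the algorithm. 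The main obstacle I anticipate is not the bookkeeping but the verification and careful use of \eqref{eq:commute}: unlike the weak case, where distinct controls $u$ and $v$ had to be matched through Definition \ref{defbi-simulation}, here the \emph{identical}-control requirement of Definition \ref{defbi-simulation2} must be exploited to guarantee that a single feedback sequence simultaneously governs both systems, which is precisely what makes the one-step column $\mathrm{Col}_i(\overline{F}_r\Upsilon_{\mathcal{R}_{II}})$ the correct object to test against $\Omega_{b_i}^{\mathcal{A}_{II}}$.
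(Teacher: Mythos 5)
Your proposal is correct and follows exactly the route the paper intends: the paper's own ``proof'' of this theorem is a one-line remark that the argument is analogous to Theorem \ref{theosetstawithG}, and your write-up is precisely that analogy carried out, with the added merit of making explicit the commutation identity $\Upsilon_{\mathcal{R}_{II}} F\delta_M^r\delta_N^i=\overline{F}_r\Upsilon_{\mathcal{R}_{II}}\delta_N^i$ (valid because a strong bisimulation sends each equivalence class into a single class under every fixed input), which is the fact that lets one feedback sequence govern both the original and quotient systems and which the paper leaves implicit. No gaps relative to the paper's argument.
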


\begin{proof}
The proof process is similar to Theorem \ref{theosetstawithG}', so it will not be presented here.
\end{proof}

\begin{figure}[t]
  \centering
  \subfigure[Diagram of weak bisimulation method for set stabilization of BCN \eqref{BCNalg}.]{%
  \resizebox*{6cm}{!}{\includegraphics{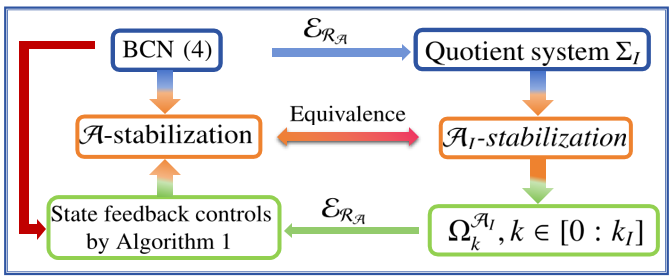}}}\hspace{0.3cm}
  \subfigure[Diagram of strong bisimulation method for set stabilization of BCN \eqref{BCNalg}.]{%
  \resizebox*{6cm}{!}{\includegraphics{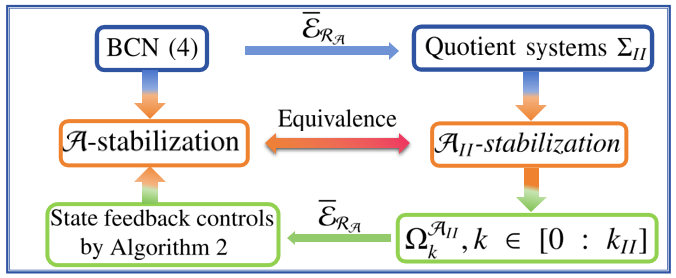}}}
\caption{Diagrams of weak bisimulation and strong bisimulation methods for set stabilization.}\label{IandII}
\end{figure}

In what follows,
the process diagrams of weak bisimulation and strong bisimulation methods for set stabilization are shown in Fig. \ref{IandII}.
And we are now in a position to elaborate the differences between two kinds of bisimulation methods in set stabilization problems, which lie in the following two aspects:
\begin{itemize}
\item The comparison of dimensionality of quotient systems induced by the weak and strong bisimulation relations.
According to Definitions \ref{defbi-simulation} and \ref{defbi-simulation2},
an equivalence relation is a strong bisimulation relation, then it must be a weak  bisimulation relation.
Hence, compared with quotient system $\Sigma_{II}$ induced by the strong bisimulation relation,
quotient system $\Sigma_I$ of  BCN \eqref{BCNalg} induced by the weak bisimulation with the smaller dimension is more easily analyzed to verify the set stabilization of BCN \eqref{BCNalg} by Theorem \ref{theo-setstaBCNquo}.

\item
The dependency relationship of the control laws on BCN \eqref{BCNalg}.
As stated in Algorithm \ref{algoG},
the feedback control via the weak bisimulation method not only need to the information of quotient system $\sum_I$, but also relies on state transition matrix $F$ of BCN \eqref{BCNalg}.
However, in Algorithm \ref{algoGII},
the feedback law based on the strong bisimulation method is only dependent on the dynamic of quotient system $\sum_{II}$, and does not require information from BCN \eqref{BCNalg}.
Therefore,
the design of state feedback control laws for BCN \eqref{BCNalg} is more easy-implement through strong bisimulation method (by Algorithm \ref{algoGII}) than weak bisimulation method (by Algorithm \ref{algoG}).

\end{itemize}

Next, we give the following example to intuitively describes the differences in handling set stabilization problems via two types of bisimulation methods.

\begin{example}
Consider again the BCN  in Example \ref{exam-comparsionIandII}.
Let $\mathcal{A}=\{\delta_8^5, \delta_8^6, \delta_8^7, \delta_8^8\}$.
By \eqref{SA}, one has $\Upsilon_{\mathcal{R}_{\mathcal{A}}}=\mathrm{diag}\{{\mathbf{1}}_{4\times 4},{\mathbf{1}}_{4\times 4}\}$.
By calculation,
we get that the maximum bisimulation-I relation contained in $\mathcal{R}_{\mathcal{A}}$ is
$\mathcal{E}_{\mathcal{R}_{\mathcal{A}}}=\{(\alpha_1, \alpha_2) | \alpha_1, \alpha_2\in\{\delta_8^1, \delta_8^2\}\}\cup\{(\alpha_3, \alpha_4) | \alpha_3, \alpha_4\in\{\delta_8^3, \delta_8^4\}\}\cup\{(\alpha_5, \alpha_6) | \alpha_5, \alpha_6\in\{\delta_8^5, \delta_8^6,\delta_8^7, \delta_8^8\}\}$.
Then the quotient system $\Sigma_{I}$ induced by $\mathcal{E}_{\mathcal{R}_{\mathcal{A}}}$ is obtained with one step transition matrix $\Psi^{[I]}=\delta_3[1\hat{+}2, 1\hat{+}3, 3]$ and $\mathcal{A}_I=\{\delta_3^3\}$.
By examine, the fact is that the quotient system $\Sigma_{I}$ is $\mathcal{A}_I$-stabilization,
which equals to the $\mathcal{A}$-stabilization of the BCN.
And based on \eqref{omegak},
we get that $\Omega^{{\mathcal{A}}_I}_0=\{\delta_3^2\}$, $\Omega^{{\mathcal{A}}_I}_1=\{\delta_3^2\}$ and $\Omega^{{\mathcal{A}}_I}_2=\{\delta_3^1\}$.
By executing Algorithm \ref{algoG},
state feedback matrix $G$ is obtained,
where
\begin{equation}\label{exam-GI}
\left\{
\begin{aligned}
{\mathrm{Col}}_1(G)&=\delta_2^2, {\mathrm{Col}}_2(G)=\delta_2^1, {\mathrm{Col}}_3(G)=\delta_2^2,\\
{\mathrm{Col}}_4(G)&=\delta_2^1,
{\mathrm{Col}}_i(G)\in\Delta_2, i\in[5:8].
\end{aligned}
\right.
\end{equation}

On the other hand,
we discuss the set stabilization of the BCN by strong bisimulation relation.
By calculation, the maximum strong bisimulation relation contained in $\mathcal{R}_{\mathcal{A}}$ is \begin{equation}
\{(\delta_8^1, \delta_8^1), (\delta_8^2, \delta_8^2), (\delta_8^3,\delta_8^3), (\delta_8^4,\delta_8^4)\}\cup \{(\alpha_5, \alpha_6) | \alpha_5, \alpha_6\in\{\delta_8^5, \delta_8^6,\delta_8^7, \delta_8^8\}\}.
\end{equation}
Then the state transition matrix of the quotient system $\Sigma_{II}$ induced by strong bisimulation relation is derived as,
\begin{equation}
F_{[II]}=\delta_5[2\ 3\ 1\ 5\ 5\ 3\ 2\ 5\ 2\ 5].
\end{equation}
By verification,
 system $\Sigma_{II}$ is $\mathcal{A}_{II}=\{\delta_5^5\}$-stabilization.
Then from Proposition \ref{propo-quoII-setstab},
it is affirmed that the BCN is $\mathcal{A}$-stabilization.
And by Algorithm \ref{algoGII}, all kinds of state feedback matrices are also designed,
where the result is similar to formula \eqref{exam-GI}.
\end{example}

\subsection{The analysis of set stabilization of PBCNs via probabilistic bisimulation}
In this section, the research scope is extended to PBCN with the algebraic form
\begin{equation}{\label{PBCNalg}}
x(t+1)=\widehat{F}_{\sigma(t)}u(t)x(t), t\in\mathbb{N},
\end{equation}
where $x(t)\in\Delta_N, u(t)\in\Delta_M$ and $\widehat{F}_{\sigma(t)}\in \mathcal{L}_{N\times NM}$, $\sigma:\mathbb{N}\mapsto [1:s]$,
and $\mathbb{P}(\sigma(t)=k)=p_k, k\in[1:s] (\sum_{k=1}^sp_k=1)$.

Denote by $x|_{x_0, \mathbf{u}}(t)$ the state at the $t$-th step of PBCN \eqref{PBCNalg} from initial state $x_0\in\Delta_N$, under a control sequence $\mathbf{u}={\{u(t)\}}_{t\in\mathbb{N}}$.
Then we give the following definition about probability bisimulation relation.

\begin{definition}{\rm \cite{Lirui2022modelreduction}}\label{defbi-Psimulation}
A relation $\mathcal{R} \subseteq \Delta_{N}\times \Delta_N$ is called a probabilistic bisimulation relation between PBCN \eqref{PBCNalg} and its replication system, if for all
$\mathcal{S}\subseteq \Delta_N$ satisfying $\mathcal{R}^{-1}\mathcal{R}(\mathcal{S})=\mathcal{S}$, we have
for any $(x_0,{\overline{x}}_0)\in \mathcal{R}$, any $\mathbf{u}={\{u(t)\}}_{t\in\mathbb{N}}$ and for any $t\in\mathbb{N}$,
\begin{equation}\label{PFuxFvy}
  \mathbb{P}(x|_{x_0,\mathbf{u}}(t)\in \mathcal{S})=\mathbb{P}(x|_{{\overline{x}}_0,\mathbf{u}}(t)\in \mathcal{R}(\mathcal{S})),
\end{equation}
where $\mathcal{R}(\mathcal{S})=\{\alpha_2 | (\alpha_1, \alpha_2)\in \mathcal{R}, \alpha_1\in \mathcal{S}\}$ and $ \mathcal{R}^{-1}\mathcal{R}(\mathcal{S})=\{\alpha_1 |(\alpha_1, \alpha_2)\in \mathcal{R}, \alpha_2\in \mathcal{R}(\mathcal{S})\}$.
\end{definition}

Next, we dedicate to expanding the applicability of the proposed method of bisimulation matrices to the probability bisimulation of PBCNs.

Consider PBCN \eqref{PBCNalg}.
Let $\widetilde{F}_{k}=\widehat{F}_k W_{[N,M]}\in \mathcal{L}_{N\times NM}, k\in[1:s]$ and $\widetilde{F}=\sum_{k=1}^s p_k\widetilde{F}_k=[\widetilde{F}_{[1]}, \widetilde{F}_{[2]}, \ldots, \widetilde{F}_{[N]}]$,
where $\widetilde{F}_{[i]}=\widetilde{F} \delta_N^i, i\in[1:N]$ and ${[\widetilde{F}_{[i]}]}_{j,q}$ is the transition probability from state $\delta_N^i$ to state $\delta_N^j$ under input $\delta_M^q, j\in[1:N], q\in[1:M]$.
Then we construct a matrix ${\widetilde{M}}_{\mathcal{R}_{\mathcal{A}},\widetilde{F}}\in\mathcal{B}^{N\times N}$,
called as the probabilistic bisimulation matrix of PBCN \eqref{PBCNalg}, as follows:
\begin{equation}\label{MRF}
    {[{\widetilde{M}}_{\mathcal{R}_{\mathcal{A}},\widetilde{F}}]}_{i,j}=\left\{
    \begin{aligned}
        1,\ &\mbox{if} \ \Upsilon_{\mathcal{R}_{\mathcal{A}}}\widetilde{F}_{[i]}
        =\Upsilon_{\mathcal{R}_{\mathcal{A}}}\widetilde{F}_{[j]},\\
        0,\ &\mbox{otherwise}.
    \end{aligned}
    \right.
\end{equation}

Then the criterion for the probabilistic bisimulation relation can be formulated.

\begin{theorem}\label{theoPbi-sim}
An equivalence relation $\mathcal{R}_{\mathcal{A}}\subseteq\Delta_{N}\times \Delta_{N}$ is a probabilistic bisimulation relation between PBCN \eqref{PBCNalg} and its replication system, if and only if
\begin{equation}\label{equtheoPbi-sim}
\Upsilon_{\mathcal{R}_{\mathcal{A}}}\leq {\widetilde{M}}_{\mathcal{R}_{\mathcal{A}},\widetilde{F}}.
\end{equation}

\end{theorem}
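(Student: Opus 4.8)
The plan is to prove the equivalence by translating the probabilistic bisimulation condition of Definition \ref{defbi-Psimulation} into a statement about one-step transition probabilities into equivalence classes, which is precisely what the entries of $\widetilde{M}_{\mathcal{R}_{\mathcal{A}},\widetilde{F}}$ record. The first preparatory observation is that, since $\mathcal{R}_{\mathcal{A}}$ is an equivalence relation, the constraint $\mathcal{R}^{-1}\mathcal{R}(\mathcal{S})=\mathcal{S}$ merely says that $\mathcal{S}$ is \emph{saturated} (a union of equivalence classes), and that for such $\mathcal{S}$ one has $\mathcal{R}(\mathcal{S})=\mathcal{S}$. Hence the defining requirement collapses to $\mathbb{P}(x|_{x_0,\mathbf{u}}(t)\in\mathcal{S})=\mathbb{P}(x|_{\overline{x}_0,\mathbf{u}}(t)\in\mathcal{S})$ for every saturated $\mathcal{S}$, every $(x_0,\overline{x}_0)\in\mathcal{R}_{\mathcal{A}}$, every open-loop sequence $\mathbf{u}$, and every $t$. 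The second preparatory fact is that, for a fixed deterministic input $\delta_M^q$, the successor distribution satisfies $\mathbb{E}[x(t+1)\mid x(t)=\delta_N^i]=\widetilde{F}_{[i]}\delta_M^q$, so that for any class $C$ one has $\mathbb{P}(x(t+1)\in C\mid x(t)=\delta_N^i)=\mathcal{I}(C)^{\mathrm{T}}\widetilde{F}_{[i]}\delta_M^q$, while $\mathrm{Row}_l(\Upsilon_{\mathcal{R}_{\mathcal{A}}})$ is exactly the indicator of the class of $\delta_N^l$. Consequently the entrywise condition $\Upsilon_{\mathcal{R}_{\mathcal{A}}}\widetilde{F}_{[i]}=\Upsilon_{\mathcal{R}_{\mathcal{A}}}\widetilde{F}_{[j]}$ is equivalent to saying that $\delta_N^i$ and $\delta_N^j$ have identical probabilities of entering every class under every input.

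For necessity I would instantiate the reduced bisimulation condition at $t=1$, taking $\mathcal{S}=C$ a single equivalence class, a control sequence whose initial input is $\delta_M^q$, and $(x_0,\overline{x}_0)=(\delta_N^i,\delta_N^j)\in\mathcal{R}_{\mathcal{A}}$. This yields $\mathcal{I}(C)^{\mathrm{T}}\widetilde{F}_{[i]}\delta_M^q=\mathcal{I}(C)^{\mathrm{T}}\widetilde{F}_{[j]}\delta_M^q$ immediately; ranging over all classes $C$, all inputs $q$, and all pairs $(i,j)\in\mathcal{R}_{\mathcal{A}}$ gives $\Upsilon_{\mathcal{R}_{\mathcal{A}}}\widetilde{F}_{[i]}=\Upsilon_{\mathcal{R}_{\mathcal{A}}}\widetilde{F}_{[j]}$, that is $\Upsilon_{\mathcal{R}_{\mathcal{A}}}\le\widetilde{M}_{\mathcal{R}_{\mathcal{A}},\widetilde{F}}$.

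For sufficiency I would argue by induction on $t$, proving the probabilistic equality simultaneously for all saturated $\mathcal{S}$. The base case $t=0$ is clear, since $x_0,\overline{x}_0$ share a class and $\mathcal{S}$ is saturated. For the step I write $\mathbb{P}(x(t+1)\in\mathcal{S})=\mathcal{I}(\mathcal{S})^{\mathrm{T}}P_{u(t)}\mu_t^{x_0}$, where $P_{u(t)}$ is the one-step stochastic matrix whose $i$-th column is $\widetilde{F}_{[i]}u(t)$ and $\mu_t^{x_0}:=\mathbb{E}[x|_{x_0,\mathbf{u}}(t)]$ is the state distribution. The crux is a lumpability lemma: writing $\mathcal{I}(\mathcal{S})^{\mathrm{T}}=\sum_{C\subseteq\mathcal{S}}\mathcal{I}(C)^{\mathrm{T}}$ by saturation and invoking the matrix hypothesis, the row vector $g^{\mathrm{T}}:=\mathcal{I}(\mathcal{S})^{\mathrm{T}}P_{u(t)}$ is constant on equivalence classes. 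A class-constant $g$ decomposes as $g^{\mathrm{T}}=\sum_C g_C\,\mathcal{I}(C)^{\mathrm{T}}$ with each $\mathcal{I}(C)$ a saturated indicator, so the inductive hypothesis applied class-by-class gives $g^{\mathrm{T}}\mu_t^{x_0}=g^{\mathrm{T}}\mu_t^{\overline{x}_0}$, which is exactly the desired equality at level $t+1$.

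The step I expect to be the main obstacle is this inductive sufficiency argument, and in particular the passage from the one-step class-preservation guaranteed by the matrix condition to the multi-step probabilistic equivalence demanded by the definition. The care required is twofold: first, to derive the lumpability claim cleanly from $\Upsilon_{\mathcal{R}_{\mathcal{A}}}\widetilde{F}_{[i]}=\Upsilon_{\mathcal{R}_{\mathcal{A}}}\widetilde{F}_{[j]}$ using saturation of $\mathcal{S}$; and second, to keep the induction uniform over all saturated sets, so that the class-by-class decomposition of $g$ may legitimately invoke the hypothesis. The remaining computations—expressing successor distributions through $\widetilde{F}_{[i]}$ and identifying rows of $\Upsilon_{\mathcal{R}_{\mathcal{A}}}$ with class indicators—are routine.
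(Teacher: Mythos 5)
Your proposal is correct, and its overall architecture matches the paper's: both directions reduce the abstract bisimulation condition to statements about one-step transition probabilities into equivalence classes, the necessity direction instantiates Definition \ref{defbi-Psimulation} at $t=1$ with $\mathcal{S}$ a single class to read off $\Upsilon_{\mathcal{R}_{\mathcal{A}}}\widetilde{F}_{[i]}=\Upsilon_{\mathcal{R}_{\mathcal{A}}}\widetilde{F}_{[j]}$, and the sufficiency direction proceeds by induction on $t$ over saturated sets. Where you genuinely diverge is in the mechanics of the induction step. The paper splits $\mathbb{P}(x|_{x_0,\mathbf{u}}(k+1)\in\mathcal{S})$ by conditioning only on whether $x|_{x_0,\mathbf{u}}(k)$ lies in $\mathcal{S}$ or in $\mathcal{S}^c$, and then treats the one-step probability $\mathbb{P}(x|_{x_k,\mathbf{u}}(1)\in\mathcal{S})$ as a single number on each of these two events; that quantity is in fact only guaranteed to be constant on each \emph{equivalence class}, not on all of $\mathcal{S}^c$, so the paper's displayed chain of equalities is looser than it should be. Your lumpability argument --- showing that $g^{\mathrm{T}}=\mathcal{I}(\mathcal{S})^{\mathrm{T}}P_{u(t)}$ is class-constant, writing $g^{\mathrm{T}}=\sum_C g_C\,\mathcal{I}(C)^{\mathrm{T}}$, and invoking the inductive hypothesis class-by-class --- is exactly the refinement needed to make that step airtight, at the modest cost of carrying the induction uniformly over all saturated sets (or equivalently over all classes) rather than over the single target set. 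In short, you buy rigor in the multi-step propagation; the paper buys brevity. One small point to make explicit when you write it up: the $\sigma(t)$ are independent across time, so the process is Markov and the identity $\mathbb{P}(x(t+1)\in\mathcal{S})=g^{\mathrm{T}}\mu_t^{x_0}$ with $g$ depending only on $u(t)$ is legitimate.
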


\begin{proof}
 (Necessity):
Assume that $\mathcal{R}_{\mathcal{A}}\subseteq\Delta_{N}\times \Delta_{N}$ is a probabilistic bisimulation relation between PBCN \eqref{PBCNalg} and its replication system.
For any $(x_0,{\overline{x}}_0)=(\delta_{N}^i, \delta_{N}^j)\in \mathcal{R}_{\mathcal{A}}$,
one has ${[\Upsilon_{\mathcal{R}_\mathcal{A}}]}_{i,j}=1$.
As $\mathcal{R}_{\mathcal{A}}$ is an equivalence relation on $\Delta_N$,
its equivalence classes are established by $[\delta_N^a]=\{\delta_N^b |(\delta_N^a, \delta_N^b)\in\mathcal{R}_{\mathcal{A}}\}, a\in[1:N]$,
and satisfy ${\mathcal{R}_{\mathcal{A}}}^{-1}\mathcal{R}_{\mathcal{A}}([\delta_N^a])=[\delta_N^a]$.
For any $a\in[1:N]$ and $\forall u=\delta_M^q\in\Delta_M$,
one has
\begin{equation}
\begin{aligned}
\mathbb{P}(x|_{x_0,u}(1)\in [\delta_N^a])&=\sum_{\delta_N^b\in[\delta_N^a]}{[\widetilde{F}_{[i]}]}_{b, q}={\mathrm{Row}}_a(\Upsilon_{\mathcal{R}_\mathcal{A}}){\mathrm{Col}}_q(\widetilde{F}_{[i]})
={[\Upsilon_{\mathcal{R}_\mathcal{A}}\widetilde{F}_{[i]}]}_{a, q},\\
 \mathbb{P}(x|_{{\overline{x}}_0,u}(1)\in [\delta_N^a])&=\sum_{\delta_N^b\in[\delta_N^a]}{[\widetilde{F}_{[i]}]}_{b, q}={\mathrm{Row}}_a(\Upsilon_{\mathcal{R}_\mathcal{A}}){\mathrm{Col}}_q(\widetilde{F}_{[j]})
={[\Upsilon_{\mathcal{R}_\mathcal{A}}\widetilde{F}_{[j]}]}_{a, q}.
\end{aligned}
\end{equation}
It follows from Definition \ref{defbi-Psimulation}
that $\mathbb{P}(x|_{x_0,u}(1)\in [\delta_N^a])=\mathbb{P}(x|_{{\overline{x}}_0,u}(1)\in [\delta_N^a])$,
$\forall a\in[1:N], u=\delta_M^q\in\Delta_M$,
\emph{i.e.,} ${\Upsilon_{\mathcal{R}_\mathcal{A}}\widetilde{F}_{[i]}}={\Upsilon_{\mathcal{R}_\mathcal{A}}\widetilde{F}_{[j]}}$.
By \eqref{MRF},
we have ${[{\widetilde{M}}_{\mathcal{R}_{\mathcal{A}}, \widetilde{F}}]}_{i,j}=1$.
Due to the arbitrariness of $(\delta_{N}^i, \delta_{N}^j)\in \mathcal{R}_{\mathcal{A}}$,
one can be concluded that \eqref{equtheobi-sim} holds.

(Sufficiency):
For any $(x_0, {\overline{x}}_0)=(\delta_{N}^i, \delta_{N}^j)\in \mathcal{R}_{\mathcal{A}}$,
 one has ${[\Upsilon_{\mathcal{R}_\mathcal{A}}]}_{i,j}=1$.
From equation \eqref{equtheobi-sim},
we get that ${[{\widetilde{M}}_{\mathcal{R}_{\mathcal{A}},\widetilde{F}}]}_{i,j}=1$,
which equals that ${\Upsilon_{\mathcal{R}_\mathcal{A}}\widetilde{F}_{[i]}}={\Upsilon_{\mathcal{R}_\mathcal{A}}\widetilde{F}_{[j]}}$.
For any $\mathcal{S}\subseteq \Delta_N$ satisfying ${\mathcal{R}_\mathcal{A}}^{-1}\mathcal{R}_\mathcal{A}(\mathcal{S})=\mathcal{S}$,
there exist $a_l, l\in[1:\overline{l}]$,
such that $\mathcal{S}=\mathcal{R}_\mathcal{A}(\mathcal{S})=\cup_{l=1}^{\overline{l}}[\delta_N^{a_l}]$.
Then for any $u(0)=\delta_M^q$,
one has
\begin{equation*}
\begin{aligned}
\mathbb{P}(x|_{x_0,u(0)}(1)\in \mathcal{S})&=\sum_{l=1}^{\overline{l}}
\mathbb{P}(x|_{x_0,u(0)}(1)\in [\delta_N^{a_l}])=
\sum_{l=1}^{\overline{l}}{[\Upsilon_{\mathcal{R}_\mathcal{A}}\widetilde{F}_{[i]}]}_{a_l, q}\\
&=\sum_{l=1}^{\overline{l}}{[\Upsilon_{\mathcal{R}_\mathcal{A}}\widetilde{F}_{[j]}]}_{a_l, q}=\sum_{l=1}^{\overline{l}}
\mathbb{P}(x|_{{\overline{x}}_0,u(0)}(1)\in [\delta_N^{a_l}])\\
&=\mathbb{P}(x|_{{\overline{x}}_0,u(0)}(1)\in \mathcal{S}).
\end{aligned}
\end{equation*}
Next, we prove through iteration that $\mathbb{P}(x|_{x_0,\mathbf{u}}(t)\in \mathcal{S})=\mathbb{P}(x|_{{\overline{x}}_0,\mathbf{u}}(t)\in \mathcal{S}), \forall t\in\mathbb{N}, \mathbf{u}={\{u(t)\}}_{t\in\mathbb{N}}$.
Suppose that the result above holds for $k\in\mathbb{N}$.
Let us check that it is also satisfied for $k+1$.
It is easy to see that
\begin{equation*}
\footnotesize
\begin{aligned}
\mathbb{P}(x|_{x_0,\mathbf{u}}(k+1)\in \mathcal{S})&=\mathbb{P}(x|_{x_k,\mathbf{u}}(1)\in \mathcal{S}| x_k=x|_{x_0,\mathbf{u}}(k)\in \mathcal{S})+\mathbb{P}(x|_{x_k,\mathbf{u}}(1)\in \mathcal{S}| x_k=x|_{x_0,\mathbf{u}}(k)\not\in \mathcal{S})\\
&=\mathbb{P}(x|_{x_k,\mathbf{u}}(1)\in \mathcal{S})\mathbb{P}(x_k=x|_{x_0,\mathbf{u}}(k)\in \mathcal{S})+\mathbb{P}(x|_{x_k,\mathbf{u}}(1)\in \mathcal{S})\mathbb{P}(x_k=x|_{x_0,\mathbf{u}}(k)\not\in \mathcal{S})\\
&=\mathbb{P}(x|_{\overline{x}_k,\mathbf{u}}(1)\in \mathcal{S})\mathbb{P}(\overline{x}_k=x|_{\overline{x}_0,\mathbf{u}}(k)\in \mathcal{S})+\mathbb{P}(x|_{\overline{x}_k,\mathbf{u}}(1)\in \mathcal{S})\mathbb{P}(\overline{x}_k=x|_{\overline{x}_0,\mathbf{u}}(k)\not\in \mathcal{S})\\
&=\mathbb{P}(x|_{\overline{x}_0,\mathbf{u}}(k+1)\in \mathcal{S}).
\end{aligned}
\end{equation*}
According to Definition \ref{defbi-Psimulation},
it shows that $\mathcal{R}_{\mathcal{A}}\subseteq\Delta_{N}\times \Delta_{N}$ is a probabilistic bisimulation relation between PBCN \eqref{PBCNalg} and its replication system.
The proof is completed.
\end{proof}

\begin{remark}
The result of Theorem  \ref{theoPbi-sim} is tantamount with Proposition 4.2  in \cite{Lirui2022modelreduction},
 and can be extended to the verification of probabilistic bisimulation relation for any relation on $\Delta_N$.
\end{remark}

Similarly, the maximum probabilistic bisimulation relation contained in equivalence relation $\mathcal{R}_{\mathcal{A}}$,
is also calculated by replacing $M_{\mathcal{R}_{k}, F}$ with $\widetilde{M}_{\mathcal{R}_{k}, \widetilde{F}}$ in \eqref{Sk}.
Then the quotient system of PBCN \eqref{PBCNalg} is obtained,
along with
the equivalent statement of the set stabilization with probability one between PBCN \eqref{PBCNalg} and its quotient system revealed by \cite{Lirui2022quotient}.

\begin{remark}
Compared with \cite{Lirui2022modelreduction} and \cite{Lirui2022quotient},
the obtained results with the probabilistic bisimulation matrix of PBCNs,
 provide the more concise forms for the verification and calculation of probabilistic bisimulations,
and establish a unified analysis framework of bisimulations.
\end{remark}

\section{An Illustrative Example}
In this section,  an example is exploited to elaborate the effectiveness of our results.
Since the analysis and control of set stabilization of BCNs based on the weak and strong bisimulation methods are generally consistent,
only the verification process by the weak bisimulation method is demonstrated here.

\begin{example}\label{exam2}
Consider the regulation network of cellular proliferation and apoptosis presented in \cite{Lirui2021modelreduction} with 10 nodes including six state nodes $\{X_i\in \mathcal{D} | i\in[1:6]\}$ and four input nodes $\{U_j\in \mathcal{D} | j\in[1:4]\}$.
The logical form is shown as follows,
\begin{equation}
\left\{
\begin{aligned}
X_1(t+1)&=U_1(t) \wedge \neg U_2(t) \wedge \neg U_3(t),\\
X_2(t+1)&=U_3(t) \wedge \neg X_1(t),\\
X_3(t+1)&=\neg X_2(t)\wedge \neg X_4(t),\\
X_4(t+1)&=U_4(t), \ X_5(t+1)=X_3(t),\ X_6(t+1)=X_4(t),
\end{aligned}
\right.
\end{equation}
where $X_i, i\in[1:6]$ represent MYC, P27, Cyc-CDK, P53, proliferation and apoptosis, respectively;
 $U_j, j\in[1:4]$ indicate the signals of growth factor, overpopulation, hypoxia and DNA Damage, separately.
Under the framework of STP, the algebraic form of BCN \eqref{BCNlog} is expressed by transition matrix $F\in \mathcal{L}_{64 \times 1024}$ with $N=64$ and $M=16$,
which is not shown due to the space limitations.
Taking $\mathcal{A}=\{\delta_{64}^i |i\in[1:4]\cup [29:36]\cup [61:64]\}$.
Next, we concern with the $\mathcal{A}$-stabilization of the BCN,
which is induced by the partical synchronization with the second, third, fourth state nodes.

\textbf{Step 1:} According to \eqref{SA},
we calculate equivalence relation $\mathcal{R}_{\mathcal{A}}=\{(\alpha_1,\alpha_2) | \{\alpha_1,$ $\alpha_2\}\subseteq \mathcal{A} \ \mbox{or}\ \{\alpha_1,$ $\alpha_2\}\subseteq {\mathcal{A}}^c\}$.
On the basis of transition matrix $F$, the one-step reachability matrix $\Psi_{1}=\mathrm{sgn}(F \mathbf{1}_{16})\in \mathcal{B}_{64\times 64}$ is derived as
\begin{equation}\label{exam2Psi_1}
    \begin{aligned}
  \Psi_{1}=[\beta_1, \beta_2, \beta_3,\ldots, \beta_{16}],
    \end{aligned}
\end{equation}
where $\beta_i\in{\mathcal{B}}_{64\times 4}$ are shown as
\begin{equation*}
    \begin{aligned}
  &\beta_1=\beta_5=\mathbf{1}_4^{\mathrm{T}}\otimes \delta_{64} [25\hat{+}29\hat{+}57\hat{+}61],\ \beta_2=\beta_6=\mathbf{1}_4^{\mathrm{T}}\otimes \delta_{64}[26\hat{+}30\hat{+}58\hat{+}62],\\
 &\beta_3=\beta_7=\mathbf{1}_4^{\mathrm{T}}\otimes \delta_{64}[27\hat{+}31\hat{+}59\hat{+}63],\ \beta_4=\beta_8=\mathbf{1}_4^{\mathrm{T}}\otimes \delta_{64}[20\hat{+}24\hat{+}52\hat{+}56],\\
 &\beta_9=\beta_{13}=\mathbf{1}_4^{\mathrm{T}}\otimes \delta_{64}[25\hat{+}29\hat{+}41\hat{+}45\hat{+}57\hat{+}61], \\ &\beta_{10}=\beta_{14}=\mathbf{1}_4^{\mathrm{T}}\otimes \delta_{64}[26\hat{+}30\hat{+}42\hat{+}46\hat{+}58\hat{+}62], \\
 &\beta_{11}=\beta_{15}=\mathbf{1}_4^{\mathrm{T}}\otimes \delta_{64}[27\hat{+}31\hat{+}43\hat{+}47\hat{+}59\hat{+}63],\\  &\beta_{12}=\beta_{16}=\mathbf{1}_4^{\mathrm{T}}\otimes \delta_{64}[20\hat{+}24\hat{+}36\hat{+}40\hat{+}52\hat{+}56].
    \end{aligned}
\end{equation*}

\textbf{Step 2:} Let $\Upsilon_{\mathcal{R}_0}=\Upsilon_{\mathcal{R}_{\mathcal{A}}}$.
By running the procedure in \eqref{Sk},
we get that $k^*={\mathrm{arcmin}}_{k\in\mathbb{N}}\{\Upsilon_{\mathcal{R}_k}=\Upsilon_{\mathcal{R}_{k+1}}\}=3$,
\emph{i.e.}, $\Upsilon_{\mathcal{R}_3}$ $\leq M_{\mathcal{R}_3, F}$.
From Proposition \ref{propomaxbi-sim},
one has $\mathcal{E}_{\mathcal{R}_{\mathcal{A}}}={\mathcal{R}_3}
=\cup_{i=1}^8{\mathcal{R}}_3^i$,
where
\begin{equation*}
\small
    \begin{aligned}
     {\mathcal{R}}_3^1&=\{(\delta_{64}^{\alpha_1},\delta_{64}^{\alpha_2}) |\{\alpha_1,\alpha_2\} \subseteq[1:4]\},\
     {\mathcal{R}}_3^2= \{(\delta_{64}^{\alpha_1},\delta_{64}^{\alpha_2}) |\{\alpha_1,\alpha_2\} \subseteq[5:12]\cup[17:28]\},\\
     {\mathcal{R}}_3^3&=\{(\delta_{64}^{\alpha_1},\delta_{64}^{\alpha_2}) |\{\alpha_1,\alpha_2\} \subseteq[13:16]\},\
     {\mathcal{R}}_3^4=\{(\delta_{64}^{\alpha_1},\delta_{64}^{\alpha_2}) |\{\alpha_1,\alpha_2\} \subseteq[29:32]\},\\
     {\mathcal{R}}_3^5&=\{(\delta_{64}^{\alpha_1},\delta_{64}^{\alpha_2}) |\{\alpha_1,\alpha_2\} \subseteq[33:36]\},\
     {\mathcal{R}}_3^6=\{(\delta_{64}^{\alpha_1},\delta_{64}^{\alpha_2}) |\{\alpha_1,\alpha_2\} \subseteq [37:44]\cup[49:60]\},\\
     {\mathcal{R}}_3^7&=\{(\delta_{64}^{\alpha_1},\delta_{64}^{\alpha_2}) |\{\alpha_1,\alpha_2\} \subseteq [45:48]\},\
     {\mathcal{R}}_3^8=\{(\delta_{64}^{\alpha_1},\delta_{64}^{\alpha_2}) |\{\alpha_1,\alpha_2\} \subseteq [61:64]\}.
    \end{aligned}
\end{equation*}
By removing identical rows from matrix $\Upsilon_{\mathcal{E}_{\mathcal{R}_{\mathcal{A}}}}$,
we gain $\Upsilon_{\mathcal{R}_I}\in \mathcal{B}_{8\times 64}$,
where
\begin{equation*}
    \begin{aligned}
&\mathrm{Row}_1(\Upsilon_{\mathcal{R}_I})
=\mathrm{Row}_1(\Upsilon_{\mathcal{E}_{\mathcal{R}_{\mathcal{A}}}}),\
\mathrm{Row}_2(\Upsilon_{\mathcal{R}_I})
=\mathrm{Row}_5(\Upsilon_{\mathcal{E}_{\mathcal{R}_{\mathcal{A}}}}),\\
&\mathrm{Row}_3(\Upsilon_{\mathcal{R}_I})
=\mathrm{Row}_{13}(\Upsilon_{\mathcal{E}_{\mathcal{R}_{\mathcal{A}}}}),\
\mathrm{Row}_4(\Upsilon_{\mathcal{R}_I})
=\mathrm{Row}_{29}(\Upsilon_{\mathcal{E}_{\mathcal{R}_{\mathcal{A}}}}),\\
&\mathrm{Row}_5(\Upsilon_{\mathcal{R}_I})
=\mathrm{Row}_{33}(\Upsilon_{\mathcal{E}_{\mathcal{R}_{\mathcal{A}}}}),\
\mathrm{Row}_6(\Upsilon_{\mathcal{R}_I})
=\mathrm{Row}_{37}(\Upsilon_{\mathcal{E}_{\mathcal{R}_{\mathcal{A}}}}),\\
&\mathrm{Row}_7(\Upsilon_{\mathcal{R}_I})
=\mathrm{Row}_{45}(\Upsilon_{\mathcal{E}_{\mathcal{R}_{\mathcal{A}}}}),\
\mathrm{Row}_8(\Upsilon_{\mathcal{R}_I})
=\mathrm{Row}_{61}(\Upsilon_{\mathcal{E}_{\mathcal{R}_{\mathcal{A}}}}).
    \end{aligned}
\end{equation*}
Replying on \eqref{quosysPsi1},
 the one-step reachability matrix of quotient system $\Sigma_I$ is established as
 \begin{equation}
     \begin{aligned}
         \Psi_1^{[I]}= \delta_8[&2\hat{+}4\hat{+}6\hat{+}7\hat{+}8,\    2\hat{+}4\hat{+}6\hat{+}7\hat{+}8,\ 2\hat{+}6,\ 2\hat{+}6,  \ 2\hat{+}4\hat{+}6\hat{+}7\hat{+}8,\\
          &2\hat{+}4\hat{+}6\hat{+}7\hat{+}8,\ 1\hat{+}2\hat{+}5\hat{+}6,\ 1\hat{+}2\hat{+}5\hat{+}6]\in \mathcal{B}_{8\times 8}.
     \end{aligned}
 \end{equation}
The state transition diagram of quotient system $\Sigma_I$ is shown in Fig. \ref{exam4},
where arrows represent one-step reachable relationships between states in $\Delta_8$.
\begin{figure}[t]
  \centering
  \includegraphics[width=10cm]{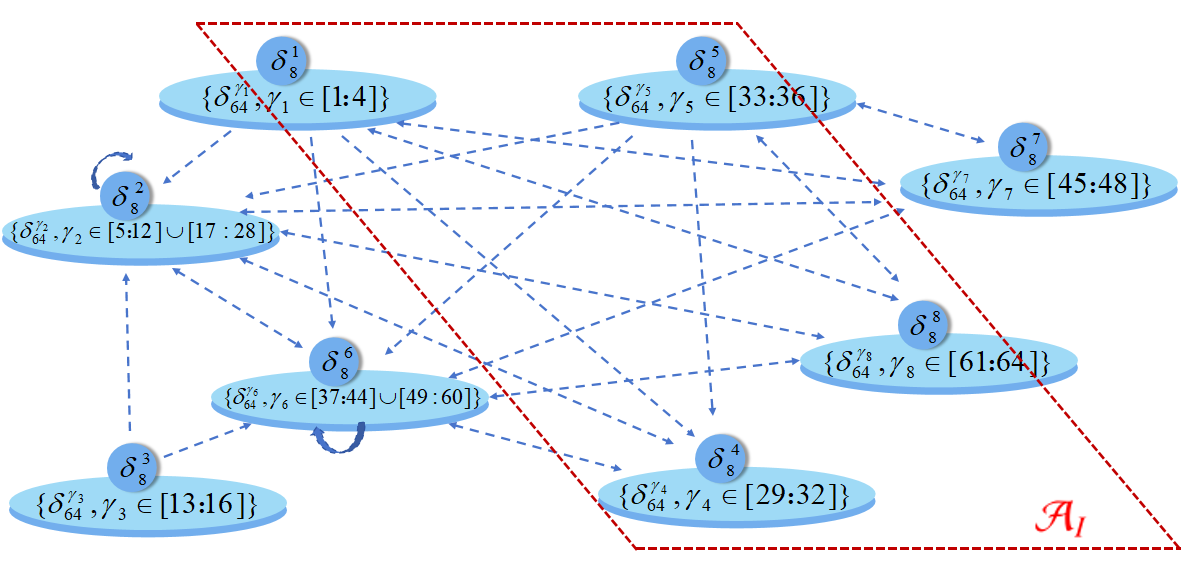}\\
  \caption{The state transition diagram of quotient system $\Sigma_I$.}\label{exam4}
\end{figure}

 According to Theorem \ref{theo-setstaBCNquo},
one can be concluded that the $\mathcal{A}$-stabilization of BCN \eqref{BCNalg} builds a equivalent bridge with the $\mathcal{A}_I$-stabilization of system $\Sigma_{I}$,
where $\mathcal{A}_I=\{\beta\in\Delta_8 |\exists \alpha \in \mathcal{A}, s.t., \beta =\Upsilon_{\mathcal{R}_I} \alpha\}=\{\delta_8^1, \delta_8^4, \delta_8^5, \delta_8^8\}$.
In what follows, we dedicate to revealing with  the $\mathcal{A}_I$-stabilization of system $\Sigma_{I}$ by verifying the condition in Lemma \ref{lemmasetstab}.

\textbf{Step 3:} For set $\mathcal{A}_I=\{\delta_8^1, \delta_8^4, \delta_8^5, \delta_8^8\}$,
we construct its matrix form $\Gamma_{\mathcal{A}_I}=[\delta_8^1,\ \mathbf{0}_8,\ \mathbf{0}_8, \delta_8^4,\  \delta_8^5,\  \mathbf{0}_8,$ $\mathbf{0}_8,\ \delta_8^8] $ by \eqref{GammaA}.
Using matrix $\Psi_1^{[I]}$,
one has $l^*={\mathrm{arcmin}}_{l\in\mathbb{N}}\{$ $\Psi_{l+1}^{[I]}\in \cup_{k=1}^{l} \Psi_{k}^{[I]}\}=3$,
 and
$\Psi^{[I]}
=\sum_{l=1}^3\Psi_{l}^{[I]}={(\mathbf{1}_{8}\otimes [1\ 1\ 0\ 1\ 1 \ 1\ 1\ 1])}^{\mathrm{T}}$.
Then one has \begin{equation}
{\mathbf{1}}_{8}^{\mathrm{T}}\odot[{(\Gamma_{\mathcal{A}_I}\odot \Psi_1^{[I]})}^{4}\Gamma_{{\mathcal{A}}_I}\odot
\Psi^{[I]}]={\mathbf{1}}_{8}^{\mathrm{T}}.
\end{equation}
Based on Theorem \ref{theo-setstaBCNquo},
one can be concluded that  system $\Sigma_{I}$ is ${\mathcal{A}}_I$-stablizable.
It also follows that BCN \eqref{BCNalg} is $\mathcal{A}$-stabilizable.
Next, we intend to design the state feedback controls to achieve the $\mathcal{A}$-stabilization of BCN \eqref{BCNalg}.

Utilizing \eqref{omegak},
we get the following sets
$\Omega_0^{{\mathcal{A}}_I}=\{\delta_8^1, \delta_8^5, \delta_8^8\},$
$\Omega_1^{{\mathcal{A}}_I}=\{\delta_8^2, \delta_8^6, \delta_8^7\},$
 $\Omega_2^{{\mathcal{A}}_I}=\{\delta_8^3, \delta_8^4\}$.
Then by Algorithm \ref{algoG},
we collect $7^{24}\times {(16)}^{8}\times 3^{24}\times 4^8$ kinds of state feedback matrices,
where the accessible set of each column is shown as follows,
\begin{equation}
    \begin{aligned}
        {\mathrm{Col}}_{i_1}(G)&\in \{\delta_{16}^{\gamma} |\gamma\in\{2,4,6,10,12,14,16\}\}, \ i_1\in[1:12]\cup[17:28],\\
        {\mathrm{Col}}_{i_2}(G)&\in \Delta_{16},\ i_2\in[13:16]\cup[29:32],\\
        {\mathrm{Col}}_{i_3}(G)&\in\{\delta_{16}^{\gamma} |\gamma\in\{4,12,16\}\},\ i_3\in[33:44]\cup[49:60],\\
       {\mathrm{Col}}_{i_4}(G)&\in \{\delta_{16}^{\gamma} |\gamma\in\{1,5,9,13\}\}, \ i_4\in[45:48]\cup[61:64].
    \end{aligned}
\end{equation}

Let us choose one of them as an example, that is
\begin{equation}\label{examG}
\begin{aligned}
    G=\delta_{16}[&\ 2\ 2\ 2\ 2\ 2\ 2\ 2\ 2\ 2\ 2\ 2\ 2\ 2\ 2\ 2\ 2\ 2\ 2\ 2\ 2\ 2\ 2\ 2\ 2\ 2\ 2\ 2\ 2\ 2\ 2\ 2\ 2\\
                  &\ 4\ 4\ 4\ 4\ 4\ 4\ 4\ 4\ 4\ 4\ 4\ 4\ 1\ 1\ 1\ 1\ 4\ 4\ 4\ 4\ 4\ 4\ 4\ 4\ 4\ 4\ 4\ 4\ 1\ 1\ 1\ 1].
\end{aligned}
\end{equation}
\begin{figure}[t]
  \centering
  \includegraphics[width=12cm]{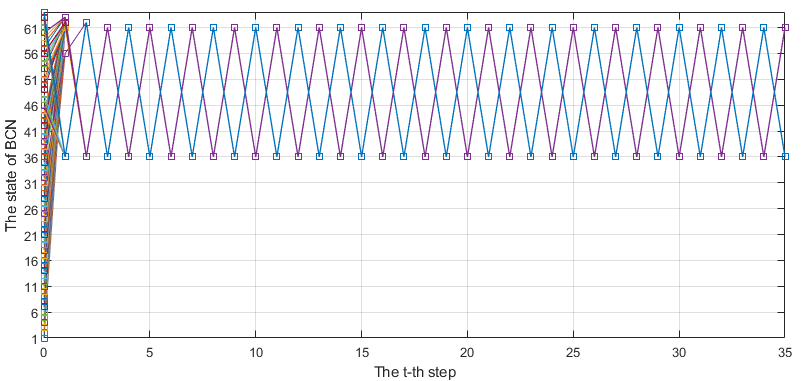}
  \caption{State trajectories of BCN \eqref{BCNalg} with state feedback matrix $G$ in \eqref{examG} from all initial states.}\label{exam2G.png}
\end{figure}

Fig. \ref{exam2G.png} shows the state trajectories of BCN \eqref{BCNalg} starting from all initial states,
where value $i\in[1:64]$ in Y-axis expresses state $\delta_{64}^i$, and X-axis represents the number of steps.
From Fig. \ref{exam2G.png}, one can be concluded that BCN \eqref{BCNalg} can achieve $\mathcal{A}$-stabilization under the state feedback control above.
\end{example}

\section{Conclusions}

A new method for set stabilization of BCNs has been proposed as a candidate for the complexity reduction, based on bisimulation relations.
In contrast to \cite{Lirui2021modelreduction}\cite{Lirui2021quotients},
novelty criteria have been given to verify two kinds of bisimulation relations by constructing the bisimulation matrices.
Meanwhile,
it has been concise and efficient to calculate the maximum bisimulation relations included by an equivalence relation,
along with the minimal quotient system.
The discrepancy about two kinds of bisimulation methods for set stabilization of BCNs has been presented,
which contains the comparison of quotient systems induced by two kinds of bisimulations,
and dependence of control laws on BCNs.
Then the proposed method can also be applied for probabilistic bisimulation relations of PBCNs,
thereby creates a unified research framework of bisimulations.
Moreover,
some problems including but not limited to reachability, observability, synchronization, output tracking of BCNs can be solved using these two bisimulation methods.
In the future, we will aim to discuss model reduction for multiple types of BCNs, combining with knowledge of bisimulation methods and graph theory, etc.

\section*{Credit author statement}
\textbf{Tiantian Mu:} Conceptualization, Formal analysis, Methodology, Software, Writing-original draft preparation, Writing-original draft.
\textbf{Jun-e Feng:} Validation, Supervision, Project administration, Funding acquisition.
\textbf{Biao Wang:} Validation, Supervision, Project administration, Funding acquisition.

\section*{Conflict of interest}
All authors declared that they have no conflict of interest relevant to this article.

\bibliographystyle{unsrt}
\bibliography{set-stabilization-reference}{}

\end{document}